\theoremstyle{plain}
\newtheorem{theorem}{Theorem}[section]
\crefname{theorem}{Theorem}{Theorems}
\newtheorem{proposition}[theorem]{Proposition}
\crefname{proposition}{Proposition}{Propositions}
\newtheorem{corollary}[theorem]{Corollary}
\crefname{corollary}{Corollary}{Corollaries}
\newtheorem{lemma}[theorem]{Lemma}
\crefname{lemma}{Lemma}{Lemmas}
\crefname{conjecture}{Conjecture}{Conjectures}
\crefname{problem}{Problem}{Problem}
\crefname{claim}{Claim}{Claims}
\crefname{observation}{Observation}{Observations}
\crefname{setup}{Setup}{Setups}
\crefname{myth}{Myth}{Myths}
\crefname{fact}{Fact}{Facts}
\crefname{algorithm}{Algorithm}{Algorithms}
\newtheorem{remark}[theorem]{Remark}
\crefname{remark}{Remark}{Remarks}
\crefname{example}{Example}{Examples}
\theoremstyle{definition}
\newtheorem{definition}[theorem]{Definition}
\crefname{definition}{Definition}{Definitions}
\crefname{construction}{Construction}{Constructions}
\crefname{question}{Question}{Questions}
\numberwithin{equation}{section}
\newcommand\thankssymb[1]{\textsuperscript{\@fnsymbol{#1}}}
\setlist[enumerate,1]{label={\upshape (\roman*)}}
\DeclareMathOperator{\FP}{FP}
\DeclareMathOperator{\Maj}{Maj}
\author[Aracena]{Julio Aracena}
\author[Astete-Elguin]{Raúl Astete-Elguin\thankssymb{1}}
\address[Aracena]{Departamento de Ingeniería Matemática, Facultad de Ciencias Físicas y Matemáticas, Universidad de Concepción, Chile.}
\email{jaracena@udec.cl}
\address[Astete-Elguin]{Departamento de Ingeniería Informática y Ciencias de la Computación, Facultad de Ingeniería, Universidad de Concepción, Chile.}
\email{
rastete2018@udec.cl}
\thanks{\thankssymb{1} Corresponding author}
\thanks{The research leading to these results was supported by ANID-Chile - Magíster Nacional and Proyecto Basal FB 210005 from ANID and Center for Mathematical Modelling (CMM)}
\title{K-INDEPENDENT BOOLEAN NETWORKS}
\begin{document}

\begin{abstract}
This paper proposes a new parameter for studying Boolean networks: the independence number. We establish that a Boolean network is $k$-independent if, for any set of $k$ variables and any combination of binary values assigned to them, there exists at least one fixed point in the network that takes those values at the given set of $k$ indices. In this context, we define the independence number of a network as the maximum value of $k$ such that the network is $k$-independent. This definition is closely related to widely studied combinatorial designs, such as  ``$k$-strength covering arrays'', also known as Boolean sets with all $k$-projections surjective. Our motivation arises from understanding the relationship between a network's interaction graph and its fixed points, which deepens the classical paradigm of research in this direction by incorporating a particular structure on the set of fixed points, beyond merely observing their quantity. Specifically, among the results of this paper, we highlight a condition on the in-degree of the interaction graph for a network to be $k$-independent, we show that all regulatory networks are at most $n/2$-independent, and we construct $k$-independent networks for all possible $k$ in the case of monotone networks with a complete interaction graph.
\end{abstract}

\maketitle

\section{Introduction}

\subsection{Boolean networks and covering arrays}
A Boolean network (BN) on $n$ variables is a function $f:\{0,1\}^n\to\{0,1\}^n$,
where $f(x) = (f_1(x), \ldots, f_n(x))$ for $x\in\{0,1\}^n$. Each function $f_i:\{0,1\}^n\to\{0,1\}$ is a local activation function of the network.
For $x\in \{0,1\}^n$ we denote by $w_H(x)$ the Hamming weight of $x$. That is, the number of ones of $x$. Additionally we denote $[n] := \{1,\ldots, n\}$. The following are some examples of families of Boolean networks:

\begin{itemize}
    \item \textbf{Linear networks:} Boolean networks where each local activation function is the sum modulo two of some variables.

    \item \textbf{Majority networks:} Networks where each local activation function take the value of the majority of the variables they depend on.

    \item \textbf{AND-OR networks:} Boolean networks in which each local activation function is either a disjunction or a conjunction of the variables they depend on.

    \item \textbf{Monotone networks:} Given $x,y\in\{0,1\}^n$, denote $x\leq y$ if $x_i\leq y_i$ for every $i\in [n]$. A Boolean network $f$ is said to be monotone if it is increasing with respect to the relation $\leq$. Majority networks are a particular case of monotone networks.

    \item \textbf{Regulatory networks:} 
A Boolean function $h:\{0,1\}^n\to\{0,1\}$ is increasing with respect to the variable $i$ if, for every $x\in \{0,1\}^n$, $h(x\, : \, x_i = 0)\leq h(x\, : \, x_i = 1)$ and is said to be decreasing on $i$ if 
for every $x\in \{0,1\}^n$, $h(x\, : \, x_i = 0)\geq h(x\, : \, x_i = 1)$. A Boolean function is {\it unate} if for every $i\in[n]$, $h$ is either increasing or decreasing with respect to the input $i$.
A Regulatory Boolean network is a Boolean network where each local activation function is unate, and since every monotone Boolean function is unate, every monotone Boolean network is also regulatory.

\end{itemize}

In this article, our primary focus will be on linear and monotone networks. In general,  Boolean networks represent  $n$ variables interacting and evolving discretely over time based on a predefined rule. Introduced by Kauffman in 1969 ~\cite{kauffman1969metabolic, kauffman1993origins}, BNs find applications in diverse fields such as social networks ~\cite{green2007emergence}, genetic networks ~\cite{akutsu1999identification}, and biochemical systems ~\cite{helikar2011boolean}.

 In this context, the iteration digraph of a network $f$ over the vertices $\{0,1\}^n$ is defined such that the arcs are of the form $(x,f(x))$ for $x\in\{0,1\}^n$. Each iteration digraph fully represents a Boolean network. However, their utilization becomes impractical due to their large number of nodes. For this reason, associated with any Boolean network $f$, we can define the interaction (or dependency) digraph $G(f)$, with vertices $[n]$ and arcs $(i, j)$ indicating that $f_j$ ``depends'' on variable $i$, i.e., there exists $x\in \{0,1\}^n$ such that
\begin{equation*}
    f_j(x_1,\ldots, x_i = 0, \ldots, x_n)\neq f_j(x_1,\ldots, x_i = 1,\ldots, x_n).
\end{equation*}

It is important to note that $G(f)$ may have loops, i.e., arcs from a vertex to itself. A fixed point of $f$ is a vector $x\in \{0,1\}^n$ such that $f(x) = x$. We will denote the \textbf{set of fixed points}  by $\FP(f) = \{x\in \{0,1\}^n\,:\, f(x) = x\}$. The set of fixed points in a BN is an intriguing subject of study for various reasons. One of them is its significance in applications within biological systems, as they can be interpreted as stable patterns of gene expression. It is also of interest to understand, at a theoretical level, the configurations that lead a Boolean network to stabilize, that is, periodic points ~\cite{veliz2012computation, gadouleau2015fixed}, meaning the states \(x \in \{0,1\}^n\) such that \(f^\ell(x) = x\) for some \(\ell\). Fixed points (case $\ell = 1$) are particularly interesting for inferring information about the activation functions of the network ~\cite{KRUPA2002}. However, most works in this direction study the relationship between the number of fixed points of a Boolean network and the properties of the local activation functions ~\cite{aracena2008maximum, aracena2017number} or of its interaction graph. The information that can be obtained about the architecture of a Boolean network from \textbf{structural} properties of its fixed points has not been thoroughly explored. A first step in this direction is the work carried out in ~\cite{Osorio}, where the VC dimension in Boolean networks is defined in terms of their fixed points.

Given $x\in \{0,1\}^n$ and a set of indices $I = \{i_1,\ldots, i_k\}\subseteq [n]$ we denote $x_I = (x_{i_1},\ldots, x_{i_k})$. A covering array of strength $k$ is defined as a set of Boolean vectors from $\{0,1\}^n$ such that for every subset $I$ of $k$ indices, and for every $a = (a_1,\ldots, a_k)\in \{0,1\}^k$, there exists a vector $x$ in the set such that $x_I = a$.  In addition, we denote $CA(m,n;k)$ as the set of all covering arrays with $m$ vectors of size $n$ and strength $k$. When we do not need to refer to the number of rows, we simply denote it by $CA(n;k)$. For example, the following is an element of $CA(5,4;2):$

\begin{align*}
    B=\begin{matrix}
0&0&0&0\\1&0&1&1\\0&1&1&1\\1&1&0&1\\1&1&1&0
\end{matrix}.
\end{align*}
 One of the main challenges of covering arrays is to determine those with the least possible number of elements while maintaining strength. $CAN(n;k)$ the minimum number of rows of a matrix in $CA(m,n;k)$. It is worth mentioning that determining $CAN(n;k)$ for arbitrary values of $n$ and $k$ remains an open problem; we can see some of the known values in Table \ref{table:1}. Various efforts have been made to find approximations to this minimum. However, the case of $k = 2$ is the only non-trivial case that has been completely solved ~\cite{kleitman1973families, katona1973two}.

\begin{table}[h!]

\centering
\begin{tabular}{||c| c c c c c c||} 
 \hline
 $s\backslash t$ & 1 & 2 & 3&4&5&6 \\ [0.5ex] 
 \hline\hline
 0 & 2 & 4 & 8 &16 & 32 &64\\
 1 & 2 & 4 & 8 &12 & 32 &64\\
 2 & 2 &  5& 10 & 21 & 42 &85\\
 3 & 2 &  6& 12 & 24 & 48-52 & 96-108\\
 4 & 2 &  6& 12 & 24 & 48-54  & 96-116\\
 5 & 2 &  6& 12 &24 & 48-56 &96-118\\
 6 & 2 & 6& 12 &24 & 48-64 &96-128\\
 7 & 2 & 6 & 12 &24 & 48-64 &96-128\\
 8 & 2 & 6 & 12 & 24& 48-64 &96-128\\
 9 & 2  & 7 & 15 &30-32 & 60-64 & 120-128\\
 10 & 2 & 7 & 15-16 & 30-35 & 60-79& 120-179\\ [1ex] 
 \hline
\end{tabular}
\caption{Some known values of $CAN(s+t; t)$ ~\cite{LJKRLYKRFM2011}.}

\label{table:1}

\end{table}

Considering the preceding discussion, it becomes pertinent to investigate the implications, in terms of the interaction graph of a Boolean network, when its fixed points constitute a covering array of strength $k$. Consequently, we introduce the concept of $k$-independence for a Boolean network on $n$ variables $f$, wherein we define it as possessing fixed points that form an element of $CA(n;k)$. Moreover, we denote by $i(f)$ the maximum $k$ such that $f$ is $k$-independent, and extend this notion to graphs, stating that a graph $G$ on $n$ vertices is $k$-admissible if there exists a $k$-independent Boolean network whose interaction graph is isomorphic to $G$.

It is pertinent to ask why we study the case where fixed points form a covering array. The first reason is 
because it is a particular case of sets that have VC-dimension equals $k$. We believe it could be a significant step towards understanding the structure of fixed points against the structure of the interaction graph. Additionally, while this work introduces a previously unstudied family of Boolean networks, the study in ~\cite{KRUPA2002} addresses an inference problem in networks using covering arrays, referred to as universal matrices. There is also an applied motivation: a network of individuals expressing binary opinions can be modeled by a $k$-independent Boolean network. In such a scenario, any group of $k$ individuals can express any opinion in a stable state, providing a degree of ``independence'' in their opinions. Ultimately, this exploration not only enhances our understanding of Boolean networks but also opens new avenues for investigating their structural properties beyond the traditional focus on the number of fixed points.

\subsection{Our contribution}
As previously mentioned, this work focuses on the concepts of covering arrays and Boolean networks. Our aim is to delve deeper into the fixed points of a Boolean network, examining not only their quantity but also the specific structure of a covering array.

Our work begins by showing the existence of Boolean networks on $n$ variables and $i(f) = k$, for any $1\leq k\leq n$. However, the presented construction requires a complete interaction graph without loops and the network is not monotone.  We present necessary conditions for the existence of a $k$-independent Boolean network in terms of its local activation functions, the number of fixed points, and the properties of its interaction graph. We then show some families of graphs that are $k$-admissible for different values of $k$. In Section \ref{construcciones}, we present general constructions of networks with $i(f) = k$, representing various scenarios for the parameters $m$, $n$, and $k$ of covering arrays in $CA(m,n;k)$. Nevertheless,, these constructions do not explicitly demonstrate the existence of monotone networks with $i(f) = k$. Finally, we address this question in Section \ref{seccionmonotonas}, where we present an existence result that utilizes Steiner systems to construct the local activation functions of a monotone network with $i(f) = k$ on the complete graph without loops.

\section{Results}

\subsection{General results}
In this section, we establish the basic results on the $k$-admissibility of graphs and the existence of Boolean networks with $i(f) = k$. To do this, first, we will review some classical results from the literature concerning fixed points of Boolean networks. A  significant motivation in this area is to answer the question: What can we infer about the fixed points of $f$ based on $G(f)$, and vice versa? The results we present initially compare the number of fixed points of $f$ with properties of $G(f)$. Perhaps one of the most referenced result in this field is the feedback bound. 

Let us recall that, given a directed graph $G = (V, A)$, we define a set $S\subseteq V$ as a feedback vertex set if the subgraph $G[V\setminus S]$ is acyclic. Furthermore, we introduce the transversal number of $G$, denoted by $\tau(G)$, as the minimum cardinality of a feedback vertex set for $G$.

\begin{theorem}[Feeback bound ~\cite{aracena2008maximum}]
        For any Boolean Network $f$ we have:
    \begin{align*}
        |\FP(f)|\leq 2^{\tau(G(f))}
    \end{align*}
\end{theorem}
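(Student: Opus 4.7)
The plan is to exhibit an injection from $\FP(f)$ into $\{0,1\}^{\tau(G(f))}$, which immediately yields the bound. Fix a minimum feedback vertex set $S\subseteq [n]$ with $|S|=\tau(G(f))$, and consider the restriction map $\rho:\FP(f)\to\{0,1\}^S$ defined by $\rho(x)=x_S$. If $\rho$ can be shown to be injective, then $|\FP(f)|\leq |\{0,1\}^S|=2^{\tau(G(f))}$, which is exactly what is claimed.

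To prove injectivity, I would use the acyclicity of the induced subdigraph $G(f)[V\setminus S]$ to pick a topological ordering $i_1,\ldots,i_{n-|S|}$ of $V\setminus S$, so that every arc of this induced subgraph points from a smaller index to a larger one. Then, given $x\in\FP(f)$, the values $x_{i_1},x_{i_2},\ldots,x_{i_{n-|S|}}$ can be reconstructed in order from $x_S$: at step $t$, the fixed-point equation gives $x_{i_t}=f_{i_t}(x)$, and by the definition of $G(f)$ together with the topological order, the in-neighbors of $i_t$ all lie in $S\cup\{i_1,\ldots,i_{t-1}\}$, so the right-hand side depends only on coordinates already determined by $x_S$ and previous steps of the induction. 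Hence any two fixed points with the same restriction to $S$ must agree on all of $[n]$.

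The only subtlety worth verifying is that $f_{i_t}$ genuinely depends only on the in-neighbors of $i_t$ in $G(f)$, which is a direct unpacking of the definition of the interaction digraph. Beyond this, the argument is a straightforward reformulation of the defining property of a feedback vertex set, so I do not anticipate any serious obstacle.
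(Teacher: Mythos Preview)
Your argument is correct and is precisely the standard proof of this classical result: restrict fixed points to a minimum feedback vertex set, then use a topological ordering of the remaining vertices to show the restriction map is injective. Note, however, that the paper does not give its own proof of this theorem; it is quoted as a known result from \cite{aracena2008maximum} and used as a tool, so there is nothing to compare against beyond observing that your proposal matches the original argument in that reference.
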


This result establishes a necessary condition for the $k$-admissibility of graphs. Specifically, for a graph $G$ to be $k$-admissible, it must be the interaction graph of a Boolean network, where the fixed points form a covering array of strength $k$. This requires having at least $2^k$ fixed points.  Moreover, we stipulate that
\begin{align*}
    CAN(n;k) \leq 2^{\tau(G)} \iff \tau(G)\geq \log CAN(n;k) 
\end{align*}

It is important to note that for some values of $n$ and $k$, as seen in Table \ref{table:1}, $$\log CAN(n;k)>k,$$ and therefore in such situations, $k$-admissible graphs require $\tau(G)>k$. For example,  consider a complete bipartite graph $K_{n,2}$. In this case, $\tau(K_{n,2}) = 2$. Then, the feedback bound allows us to establish that for any Boolean network $f$ with interaction graph $K_{n,2}$, $|\FP(f)|\leq 2^{2} = 4
    $. Later, as we have seen in Table \ref{table:1}, for all $n\geq 4$ we have $CAN(n; 2)>4$, we can conclude that for $n\geq 4$, $K_{n,2}$ \textbf{is not } $k$-admissible for any $1<k\leq n$.

Hereafter, we address the problem of the existence of Boolean networks $f: \{0,1\}^n \to \{0,1\}^n$ with $i(f) = k$, for any $1 \leq k \leq n-1$. As we will see, the architecture that allows $k$-independence for any $k$ turns out to be the complete graph on $n$ vertices without loops. This is a reasonable candidate, as it is a graph with a transversal number of $n-1$.

\begin{proposition}\label{completegraph}
    Let $G = K_n$ be the complete graph without loops. Then $G$ is $(n-1)$-admissible. Moreover, for every $1\leq k \leq n-1$, there exists a Boolean network $f$ such that $G(f) = K_n$ and $i(f) = k$. 
\end{proposition}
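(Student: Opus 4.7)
The plan is to exhibit explicit Boolean networks $f$ realizing each value $i(f) = k$ on $K_n$ (no loops), using the symmetric XOR network as a base and modifying it to reduce the independence number when $k < n-1$.

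First I would handle the top case $k = n-1$ by taking $f_i(x) = \bigoplus_{j \in [n]\setminus\{i\}} x_j$. Since each $f_i$ flips whenever any single $x_j$ ($j \neq i$) flips and does not involve $x_i$, the interaction digraph equals $K_n$ without loops. The fixed-point equations $x_i = \bigoplus_{j \neq i} x_j$ all collapse to the single parity constraint $\bigoplus_j x_j = 0$; hence $\FP(f) = E$, the set of even-weight vectors of $\{0,1\}^n$. Every pattern $a \in \{0,1\}^{n-1}$ on an $(n-1)$-subset $I$ admits a unique even-weight extension, so $E \in CA(n; n-1)$ and $i(f) \geq n-1$. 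Since $|E| = 2^{n-1} < 2^n$, $n$-independence fails, so $i(f) = n-1$ and $(n-1)$-admissibility follows.

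Next, for $1 \leq k < n-1$, I would replace $f_1$ by $f_1(x) = \bigoplus_{j \neq 1} x_j \oplus h(x_2, \ldots, x_n)$ for a carefully chosen $h$, keeping the remaining $f_i$ as XOR. The equations for $i \geq 2$ still force $\bigoplus_j x_j = 0$, and the equation for $i = 1$ then reduces to $h(x_2,\ldots,x_n) = 0$, giving $\FP(f) = E \cap \{h = 0\}$. Via the bijection $E \leftrightarrow \{0,1\}^{n-1}$, $v \mapsto v_{\{2,\ldots,n\}}$, the covering-array properties of $\FP(f)$ translate into combinatorial conditions on the $0$-set of $h$. The case $k = n-2$ is settled by $h(y) = y_2 y_3 \cdots y_n$, which excises exactly one even-weight vector (the one with $x_2=\cdots=x_n=1$) and thereby breaks surjectivity on the coordinate subset $\{2,\ldots,n\}$ while leaving every $(n-2)$-subset covered; smaller $k$ are obtained by taking Boolean combinations of conjunctions whose $1$-set realizes several "forbidden" extensions lying in a common fiber of some $(k+1)$-subset. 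Throughout, $h$ is chosen to depend genuinely on each of $x_2, \ldots, x_n$, ensuring $G(f) = K_n$ is preserved.

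The principal obstacle is the design of $h$ for arbitrary $k$: one must excise just enough even-weight vectors to destroy $(k+1)$-surjectivity on some coordinate subset, yet preserve $k$-surjectivity on every $k$-subset, all while keeping $h$ genuinely dependent on each argument. I would approach this by choosing the excluded even-weight vectors to form a small, combinatorially structured family (e.g.\ a Hamming pair, or more generally a coset of a low-dimensional affine subspace chosen to project non-surjectively onto a target $(k+1)$-subset), for which both surjectivity conditions can be verified by direct counting once one fixes the projection indices.
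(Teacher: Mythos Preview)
Your treatment of $k=n-1$ is correct and coincides with the paper's: the symmetric XOR network has $\FP(f)$ equal to the even-weight vectors, which is a strength-$(n-1)$ covering array.

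For $k<n-1$ the paper does something quite different from your perturbation idea. It does \emph{not} modify the XOR network; instead it writes down the target fixed-point set directly as a Hamming-weight slice,
\[
S_k=\{x: w_H(x)\le k+1,\ w_H(x)\text{ even}\}\quad(k\text{ even}),\qquad
T_k=\{x: w_H(x)\le k+1,\ w_H(x)\text{ odd}\}\quad(k\text{ odd}),
\]
checks in two lines that this lies in $CA(n;k)\setminus CA(n;k+1)$, and then exhibits a \emph{symmetric} network $f_i(x)=1\iff w_H(x\setminus x_i)\le k$ and $w_H(x\setminus x_i)$ has the appropriate parity, whose fixed points are exactly $S_k$ (resp.\ $T_k$) and whose interaction graph is manifestly $K_n$. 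This avoids any ad~hoc design problem.

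Your route is viable but, as written, has a gap for general $k$: you carry out only $k=n-2$, and for smaller $k$ you describe a strategy (``excise a fiber over some $(k+1)$-subset'') while explicitly flagging the choice of $h$ as the principal unresolved obstacle. In fact the most naive instance of your own idea already closes the gap: take $h(x_2,\dots,x_n)=x_2x_3\cdots x_{k+2}$. Then $\FP(f)$ is the set of even-weight $x$ with $(x_2,\dots,x_{k+2})\ne\vec 1$, so the pattern $\vec 1$ is missing on the $(k+1)$-set $\{2,\dots,k+2\}$, killing $(k+1)$-independence; and for any $k$-set $I$ and any $a\in\{0,1\}^k$ there exists $j\in\{2,\dots,k+2\}\setminus I$, so setting $x_j=0$ and completing to even weight (possible since $k\le n-2$) gives a fixed point with $x_I=a$. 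Note also that your stated requirement that $h$ depend on every $x_j$ is unnecessary: $f_1=\bigoplus_{j\ne 1}x_j\oplus h$ already depends on each $x_j$ via the XOR summand wherever $h$ does not (and where $h$ does, one checks dependence by zeroing another factor of the conjunction). What your write-up lacks is exactly this explicit choice and its short verification; until those are supplied, the argument for $1\le k<n-2$ is a plan rather than a proof.
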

\begin{proof}
    Assuming linear functions in every node, we can compute that the set of fixed points is the set of every vector in $\{0,1\}^n$ with an even number of ones. This is a known covering array of strength $n-1$ (see, e.g. ~\cite{LJKRLYKRFM2011}). 

Consider $1 \leq k < n-1$, and let 
\begin{align*}
    S_k&:=\{x\in\{0,1\}^n\,:\,w_H(x)= j\leq k+1\,\,\text{and}\,\,  j = 0\mod 2\},
    \\
    T_k &:=\{x\in\{0,1\}^n\,:\,w_H(x)= j\leq k+1\,\,\text{and}\,\, j = 1\mod 2\}.
\end{align*}
We claim that if $k$ is even, $S_k \in CA(n; k) \setminus CA(n; k+1)$, and if $k$ is odd, $T_k \in CA(n; k) \setminus CA(n; k+1)$. Additionally, there exist Boolean networks $f, g : \{0,1\}^n \to \{0,1\}^n$ such that $\FP(f) = S_k$ and $\FP(g) = T_k$. We will prove the case for even $k$; the proof for odd $k$ is analogous.

Let $I = \{i_1, \ldots, i_k\} \subseteq [n]$ and $a = (a_1, \ldots, a_k) \in \{0,1\}^k$. Clearly, $a$ has at most $k$ ones. If $a$ has an even number of ones, consider $x \in \{0,1\}^n$ such that $x_I = a$ and $x_i = 0$ for every $i \not\in I$. Then $x \in S_k$. Now suppose $a$ has an odd number of ones. Consider $x \in \{0,1\}^n$ such that $x_I = a$. Choose $j \in [n] \setminus I$ and let $x_j = 1$, while for every $i \not\in I \cup \{j\}$, $x_i = 0$. Therefore, $x$ has at most $k+1$ ones, and an even number of them, i.e., $x \in S_k$. Thus, $S_k \in CA(n; k)$. If $k$ is even, then $k+1$ is odd. For every $I = \{i_1, \ldots, i_k, i_{k+1}\} \subseteq [n]$, there is no $x \in S_k$ such that $x_I = \vec{1}$. Therefore, $S_k \in CA(n; k) \setminus CA(n; k+1)$

Now define $f:\{0,1\}^n\to\{0,1\}^n$ such that for every $x = (x_1,\ldots, x_n)\in\{0,1\}^n$, $f_i(x) = 1$ iff $w_H(x\setminus x_i)\leq k$ and $w_H(x\setminus x_i)$ is odd. Here we denote $x\setminus x_i 
 := (x_1,\ldots, x_{i-1},x_{i+1},\ldots, x_n)$ and recall that $w_H(x)$ denotes the amount of ones of $x$. Then, it is easy to see that $G(f) = K_n$ and $\FP(f) = S_k$. As a final remark, for the case where $k$ is odd, we define $g:\{0,1\}^n\to\{0,1\}^n$ such that $g_i(x) = 1$ if and only if $w_H(x\setminus x_i) \leq k$ and $w_H(x\setminus x_i)$ is even.
\end{proof}

\begin{remark}\label{no son monotonas}
    The Boolean networks constructed in the previous proposition are non monotone. Indeed, for $k$ even, let $f$ be the network constructed such that $\FP(f) = S_k$. Let $x\in \{0,1\}^n$ such that $w_H(x) = k+1$, and let $y\in\{0,1\}^n$ such that $x\leq y$. We observe an index $i\in [n]$ such that $x_i = 1$. Since $x\leq y$, we have $y_i = 1$, and $w_H(y)\geq k+2$. Therefore, $f_i(y) = 0$, as $w_H(y\setminus y_i)\geq k+1$. This implies that $f(x) = x$, and hence, $f(x) \not\leq f(y)$.
\end{remark}

As we can see in Fig \ref{fig:figure2},  $k$-admissible graphs, with $k \geq 2$, are not necessarily complete, but it is true that they tend to become denser for larger values of $k$. In fact, to prove this, let us first consider the following definition.

\begin{figure*}[tbhp] 
\centering
\subfloat[3-admissible graph with linear interaction]{\includegraphics[width=0.6\columnwidth]{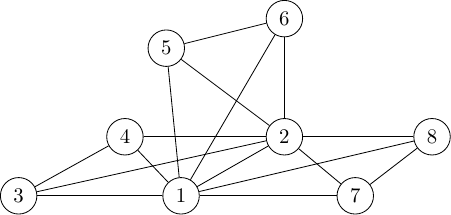}}
\hfill
\subfloat[2-admissible with majority]{\includegraphics[width=0.3\columnwidth]{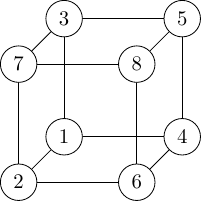}
}
\caption{Examples of $k$-admissible non-complete graphs with $k>1$.}
\label{fig:figure2}
\end{figure*}

\begin{definition}[See e.g. ~\cite{kadelka2023collectively}]
We say that $h:\{0,1\}^n\to \{0,1\}$ is  $k$-set canalizing if there exists a set $I = \{i_1,\ldots, i_k\}\subseteq \{1,\ldots, n\}$ and values $a_1,\ldots, a_k, b\in\{0,1\}$ such that
    \begin{align*}
    \forall x\in\{0,1\}^n, \, x_I = (a_1,\ldots, a_k) \implies h(x) = b
\end{align*}
In this context, we say that the input $a_1,\ldots, a_k$ canalizes $h$ to $b$. Moreover, we denote by $IC(h)$ the minimum $k$ such that $h$ is $k$-set canalizing. 
\end{definition}

It is easy to see that $h$ is $k$-set canalizing if and only if the minimum number of literals in a clause of a DNF-formula (or CNF-formula) of $h$ is $k$. The following are examples of $k$-set canalizing functions for different values of $k$:
\begin{itemize}
\item The AND function $g:\{0,1\}^n\to\{0,1\}$, defined as $g(x_1,\ldots, x_n) = \bigwedge_{i = 1}^n x_i$, is $1$-set canalizing. It canalizes to zero whenever any variable takes the value zero. Similarly, disjunctions are $1$-set canalizing, canalizing to one when any variable takes the value one.


\item The majority function $\Maj:\{0,1\}^n\to\{0,1\}$, defined as 
\begin{equation*}
    \Maj(x_1,\ldots, x_n) = 1\iff w_H(x)\geq \lceil n/2\rceil
\end{equation*}
is such that $IC(\Maj) = \lceil n/2\rceil$.
\end{itemize}

The previous concept allows us to state the following necessary condition for the $k$-independence of a Boolean network.
\begin{theorem}\label{teoremanecesario}
    Let $f = (f_1, \ldots, f_n)$ be a $k$-independent Boolean network such that $G(f)$ has no loops, then for all $i$, $IC(f_i)\geq k$.
\end{theorem}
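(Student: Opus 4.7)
The plan is to argue by contradiction. Suppose some local function $f_i$ satisfies $IC(f_i) \leq k-1$. Then there exist a set $J \subseteq [n]$ with $|J| \leq k-1$, values $a_J \in \{0,1\}^{|J|}$, and $b \in \{0,1\}$ such that every $x$ with $x_J = a_J$ satisfies $f_i(x) = b$. My first step is to normalize so that $i \notin J$: since $G(f)$ has no loops, $f_i$ does not depend on $x_i$, so if $i \in J$ then deleting $i$ from $J$ (and the corresponding coordinate from $a_J$) yields a smaller canalizing set for $f_i$ with the same output $b$. Hence I may assume $i \notin J$.

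Next, extend $J \cup \{i\}$ to some subset $I \subseteq [n]$ of size exactly $k$ (possible because $|J \cup \{i\}| \leq k$ and $n \geq k$, as $f$ is $k$-independent on $n$ variables). Define $a \in \{0,1\}^k$ on the coordinates of $I$ by $a_j = (a_J)_j$ for $j \in J$, $a_i = 1-b$, and assigning arbitrary values on the remaining indices of $I$. By the hypothesis that $f$ is $k$-independent, there exists a fixed point $x \in \FP(f)$ with $x_I = a$. In particular, $x_J = a_J$, so by the canalization assumption $f_i(x) = b$. But $x_i = 1-b$ and $x$ is a fixed point, forcing $f_i(x) = x_i = 1-b$. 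This is the desired contradiction.

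The argument is essentially just unraveling the two definitions, and the only thing that could trip me up is the possibility that $i$ belongs to $J$; the loop-free hypothesis on $G(f)$ is exactly what lets me remove it. So no step is really a serious obstacle, but it is worth flagging that the loop-free assumption on $G(f)$ is used in a critical way here: without it, one could cook up $f_i(x) = x_i$ which is $1$-set canalizing but perfectly compatible with $k$-independence for any $k$, so the bound would fail.
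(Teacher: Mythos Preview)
Your argument is correct and follows essentially the same contradiction scheme as the paper's proof: assume a canalizing set of size $<k$, use the loop-free hypothesis to ensure $i$ is not in that set, then invoke $k$-independence to produce a fixed point whose $i$-th coordinate disagrees with the canalized output $b$. The only cosmetic difference is that the paper exhibits two fixed points (one with $x_i=0$, one with $y_i=1$) and derives $0=b=1$, whereas you go directly for a single fixed point with $x_i=1-b$; your version is marginally more economical but not a different idea.
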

\begin{proof}
     By contradiction, assume that $f$ is $k$-independent, and that there exists a local activation function $f_i$ that canalizes into $\tilde I = \{i_1, \ldots, i_\ell\} \subseteq N^-(i)$ with $\ell < k$, on inputs $a = (a_1, \ldots, a_\ell) \in \{0,1\}^\ell$ to the value $b \in \{0,1\}$. Since there are no loops, we may assume that $i \notin \tilde I$. Then, $|\tilde I \cup \{i\}| = \ell + 1 \leq k$, and since $f$ is $k$-independent (and thus $(\ell + 1)$-independent), there exist two fixed points $x, y \in \FP(f)$  such that:
\begin{align*}
x_i = 0, y_i = 1, x_{\tilde I} = a = y_{\tilde I}
\end{align*}
Therefore, \(f_i(x) = f_i(y) = b\), but \(f_i(x) = x_i = 0\) and \(f_i(y) = y_i = 1\), which is a contradiction.
\end{proof}

\begin{corollary}
    If $G$ is a loopless $k$-admissible digraph, then its minimum indegree is at least $k$.
\end{corollary}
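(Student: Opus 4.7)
The plan is to derive this as an almost immediate consequence of \cref{teoremanecesario}, by coupling it with an elementary upper bound on $IC(f_i)$ in terms of the in-degree of $i$ in the interaction graph.

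First, I would unpack the definitions: since $G$ is $k$-admissible, there exists a $k$-independent Boolean network $f:\{0,1\}^n\to\{0,1\}^n$ with $G(f)\cong G$; in particular $G(f)$ is loopless. Applying \cref{teoremanecesario} directly gives $IC(f_i)\geq k$ for every $i\in[n]$. So it remains to translate this lower bound on the independence canalization parameter into a lower bound on the in-degree.

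The key observation is that $IC(f_i)\leq \deg^-_{G(f)}(i)$. Indeed, the in-neighborhood $N^-(i)$ is exactly the set of essential variables of $f_i$, so $f_i(x)$ depends only on $x_{N^-(i)}$. Hence if we let $m = |N^-(i)|$ and pick any concrete assignment $a\in\{0,1\}^m$ to these coordinates, the value $b := f_i(x)$ is the same for every $x$ with $x_{N^-(i)} = a$. By the definition of $k$-set canalization, this shows that $f_i$ is $m$-set canalizing, and therefore $IC(f_i)\leq m = \deg^-_{G(f)}(i)$.

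Chaining the two inequalities yields $\deg^-_{G(f)}(i)\geq IC(f_i)\geq k$ for every vertex $i$, and since $G\cong G(f)$ this gives the claimed bound on the minimum in-degree of $G$. There is no real obstacle here: both ingredients are essentially definitional once \cref{teoremanecesario} is in hand. The only point requiring a moment of care is verifying that an arbitrary setting of all essential inputs canalizes $f_i$, which is immediate from the fact that non-neighbors play no role in the value of $f_i$.
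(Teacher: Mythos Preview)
Your proof is correct and follows exactly the implicit reasoning behind the paper's corollary: apply \cref{teoremanecesario} to get $IC(f_i)\geq k$, then observe that $IC(f_i)$ is bounded above by the number of essential variables of $f_i$, which equals the in-degree of $i$ in $G(f)$. The paper states this corollary without proof, and your argument is precisely the intended one-line justification.
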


\begin{corollary}
      There is no AND-OR Boolean network $f$ with $i(f) \geq 2$ and loopless interaction graph.
\end{corollary}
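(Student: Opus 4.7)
The plan is to derive the corollary as an immediate consequence of \cref{teoremanecesario}. The crux is the observation that every AND-OR local function has canalization index at most one.

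First, I would note that if $f_i = \bigwedge_{j\in J} x_j$ is a (nonempty) conjunction of variables, then for any single $j\in J$ the assignment $x_j = 0$ already canalizes $f_i$ to $0$; dually, if $f_i = \bigvee_{j\in J} x_j$ is a (nonempty) disjunction, then $x_j = 1$ canalizes $f_i$ to $1$. In either case, $IC(f_i) \le 1$.

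Next, I would invoke \cref{teoremanecesario}: if $f$ were a $2$-independent Boolean network with $G(f)$ loopless, we would need $IC(f_i) \ge 2$ for every $i\in[n]$, contradicting the bound $IC(f_i)\le 1$ just established. Since $i(f)\ge 2$ implies in particular that $f$ is $2$-independent, the corollary follows by taking $k = 2$ in \cref{teoremanecesario}.

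The only subtlety is the degenerate case where some $f_i$ is a constant function (an empty AND or empty OR). There, $IC(f_i) = 0 < 2$, so the contradiction persists; alternatively, vertex $i$ would have in-degree $0$ in $G(f)$, which the preceding corollary on minimum in-degree already forbids. No real obstacle is expected: the statement is essentially a one-line consequence of \cref{teoremanecesario} together with the trivial canalization property of conjunctions and disjunctions.
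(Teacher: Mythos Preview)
Your proposal is correct and matches the paper's intended argument: the corollary is stated without proof precisely because it follows immediately from \cref{teoremanecesario} together with the observation (made just before the theorem) that conjunctions and disjunctions are $1$-set canalizing. Your handling of the degenerate constant case is a nice extra, but the core reasoning is exactly what the paper has in mind.
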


\begin{remark}
     It is worth mentioning that the hypothesis of having no loops is necessary to conclude the previous results. For instance, consider the network $f: \{0,1\}^{n} \to \{0,1\}^{n}$ defined by $f_i(x) = x_i$, for $i = 1,\ldots, n-1$; and 

$$f_{n}(x) = x_{n}\lor \left(\bigwedge_{i = 1}^{n-1} \overline{x_i}\right).$$
Then, $G(f)$ has loops and $IC(f_i) = 1$ for every $i = 1,\ldots,n$. However, the set of fixed points of $f$ is $\{0,1\}^{n}\setminus\{\Vec{0}\}$, and this set is a covering array of strength $n-1$.
\end{remark}

\begin{remark}\label{remark: contraejemplo}
     As we have seen before, it is known that for $n\geq 4$, $CAN(n;2)>4$. On the other hand, the bound $CAN(n;k)\geq 2^{k_0}CAN(n-k_0; k-k_0)$, for $k_0\leq k$, is also known ~\cite{LJKRLYKRFM2011}. Using $k-k_0 = 2$, we can conclude that $CAN(n;k)>2^k$ for all $n>k+1$. This allows us to see that for all $k>1$, the conditions $\tau(G)\geq k$, $\delta^-(G)\geq k$, and that $G$ has no loops are necessary but not sufficient. Consider $n = k^2+k >k+1$, and a complete bipartite graph $G$, with one set of size $k$ and the other of size $k^2$. For this graph, $\tau(G) = k$ and $\delta^-(G) = k$. However, since $CAN(n;k)>k$, $G$ is not $k$-admissible.

\end{remark}

\begin{figure}
\centering
  \includegraphics[scale = 0.9]{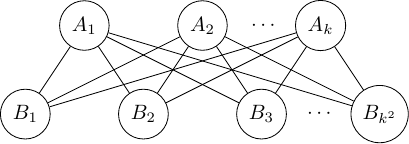}
\caption{Construction from Remark \ref{remark: contraejemplo}.}
  \label{bipartite complete k k2}

\end{figure}

\subsection{Families of \emph{k}-admissible graphs}
We have already reviewed some necessary conditions for $k$-admissibility in terms of the interaction graph and its local activation functions. On the other hand, from Proposition \ref{completegraph}, we observed that the complete graph is a suitable architecture for achieving high degrees of $k$-admissibility when considering linear networks. In this section, we will present two explicit constructions of $k$-admissible graphs for different values of $k$, inspired by the $(n-1)$-admissibility of the complete graph without loops.

\begin{proposition}\label{lema:pegarcliques}
    Let $r, s$ be two integers and define $\xi := \min\{r,s\}-1$. Then, there exists a $\xi$-admissible connected digraph on $n = r+s$ vertices.
\end{proposition}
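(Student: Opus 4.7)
The plan is to glue two linear (XOR) networks on complete graphs, extending the idea used in Proposition \ref{completegraph}. Without loss of generality assume $r\leq s$, so that $\xi = r-1$. Partition $[n]=A\sqcup B$ with $|A|=r$ and $|B|=s$, and fix a distinguished vertex $i_0\in B$. Then I would define
\begin{align*}
    f_i(x) &= \bigoplus_{j\in A\setminus\{i\}} x_j \text{ for } i\in A,\\
    f_i(x) &= \bigoplus_{j\in B\setminus\{i\}} x_j \text{ for } i\in B\setminus\{i_0\},\\
    f_{i_0}(x) &= \bigoplus_{j\in B\setminus\{i_0\}} x_j \oplus \bigoplus_{j\in A} x_j.
\end{align*}
A direct inspection of which variables each $f_i$ genuinely depends on shows that $G(f)$ consists of a copy of $K_r$ on $A$, a copy of $K_s$ on $B$, together with the arcs $(j,i_0)$ for every $j\in A$ (flipping a single $x_j$ with $j\in A$ toggles the last summand of $f_{i_0}$). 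In particular, $G(f)$ is connected on $r+s$ vertices.

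Next, I would compute $\FP(f)$. The equations $x_i=f_i(x)$ for $i\in A$ are each equivalent to the single identity $\bigoplus_{j\in A} x_j = 0$; the equations for $i\in B\setminus\{i_0\}$ are each equivalent to $\bigoplus_{j\in B} x_j = 0$; and after substituting the first identity into the equation at $i_0$, that one also reduces to $\bigoplus_{j\in B} x_j = 0$. Hence $\FP(f)$ is precisely the direct product $E_A\times E_B$, where $E_A\subseteq\{0,1\}^r$ and $E_B\subseteq\{0,1\}^s$ denote the even-Hamming-weight vectors. No spurious fixed point is introduced, because the extra coupling term in $f_{i_0}$ is forced to vanish on any element of $\FP(f)$.

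To conclude that $f$ is $\xi$-independent, given any $I\subseteq[n]$ with $|I|=r-1$ and any $a\in\{0,1\}^{r-1}$, I would write $I=I_A\cup I_B$; then $|I_A|\leq r-1$ and $|I_B|\leq r-1\leq s-1$. Invoking the fact (already used in Proposition \ref{completegraph}) that the even-weight vectors of length $m$ form a covering array of strength $m-1$, one may choose $x_A\in E_A$ matching $a_{I_A}$ and $x_B\in E_B$ matching $a_{I_B}$ independently, which produces a fixed point of $f$ with the prescribed values on $I$.

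The delicate point is the second step: verifying that the modified local function at $i_0$ preserves the product structure of $\FP(f)$ exactly. This is really the whole content of the argument, and it relies crucially on the linearity over $\mathbb{F}_2$: the $A$-equations already force $\bigoplus_{j\in A} x_j=0$ on any fixed point, so the extra coupling term is identically zero where it matters while still ensuring that $f_{i_0}$ genuinely depends on every vertex of $A$.
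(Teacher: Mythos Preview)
Your argument is correct and follows essentially the same idea as the paper: glue two linear (XOR) networks on $K_r$ and $K_s$, add a bridge so the digraph becomes connected, and observe that the fixed-point set retains enough product structure to be a strength-$\xi$ covering array. The only cosmetic difference is the direction of the bridge---the paper adds arcs from one vertex of $K_r$ into every vertex of $K_s$, whereas you add arcs from every vertex of $A$ into a single vertex $i_0\in B$---but your version has the pleasant side effect that $\FP(f)=E_A\times E_B$ exactly, which lets you verify the covering property directly via the product, rather than appealing to ``previous lemmas'' as the paper does.
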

\begin{proof}
    Let $K_r$ and $K_s$ denote the cliques on $r$ and $s$ vertices, respectively. Now we define $G$ composed by these two cliques and select $i\in V(K_r)$, and add all the arcs of the form $(i, \ell)$ for $\ell \in K_s$. Let $f:\{0,1\}^n\to\{0,1\}^n$ be a linear Boolean network with $G(f) = G$. Now, we see that for every $x\in \FP(f)$, if $x_i = 0$ the number of ones in both cliques should be even. So there are $2^{r-2}2^{s-1}$ fixed points. On the other case, if $x_i = 1$, every vector with an odd number of ones on the variables given by $K_s$, and an odd number of ones in $K_r\setminus \{i\}$, is a fixed point of $f$. In this case there are also $2^{r-2}2^{s-1}$ options. In total, there are $2^{r+s-2}$ fixed points and by previous lemmas this set is a covering array of strength $\xi$.
\end{proof}

\begin{figure}[H]
\centering
  \includegraphics[scale = 0.8]{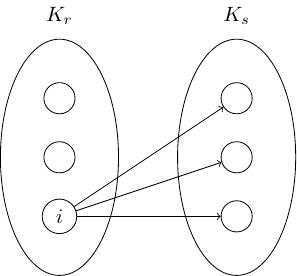}
\caption{Construction from Proposition \ref{lema:pegarcliques}.}

\end{figure}

It is worth mentioning that the previous construction only allows us to construct digraphs that are, at most, $n/2$-admissible. Moreover it provides examples of $\xi$-admissible graphs $G$ with $n$ vertices, $\tau(G)=n-2$ and $\delta^-(G) = \xi$. In the following construction, we generalize this result and show a family of strongly connected $k$-admissible graphs.

 Furthermore, we are particularly interested in finding, for a given $n$ and $k$, a family of strongly connected and $k$-admissible graphs. We were able to address this question for certain cases of $n$ and $k$ with the following lemma.

\begin{proposition}\label{windmillsxor}
    For any integer $m \geq 2$ and odd $k \geq 1$, there is a strongly connected graph that is $(m-1)$-admissible, with $n = (m-1)k + 1$ vertices.
\end{proposition}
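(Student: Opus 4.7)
The plan is to use a \textbf{windmill} architecture: take $k$ disjoint copies of the complete graph $K_m$ (no loops) and glue them at a single common hub $v_0$, which yields $(m-1)k+1$ vertices. This graph is strongly connected since each blade $K_m$ is strongly connected and contains the hub. The construction mimics \cref{lema:pegarcliques}, but with more than two cliques sharing the gluing vertex, and the oddness of $k$ will enter exactly when we analyse the consistency of the linear system at $v_0$.

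On this graph I would define the Boolean network $f$ by taking all local activation functions to be XORs of the in-neighbours (the natural generalisation of the construction used in \cref{completegraph}): for a non-hub vertex $v$ of blade $i$, $f_v(x)=x_{v_0}\oplus\bigoplus_{u\in B_i\setminus\{v_0,v\}} x_u$, and for the hub $f_{v_0}(x)=\bigoplus_{\text{all non-hub }u} x_u$. A direct computation shows that $x$ is a fixed point iff, in every blade $i$, the XOR of the non-hub variables of $B_i$ equals $x_{v_0}$, while the hub equation becomes $x_{v_0}=k\,x_{v_0}\pmod 2$; this is where the hypothesis that $k$ is odd is used, as it makes the hub equation trivially satisfied and leaves $x_{v_0}$ free.

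The heart of the proof is then verifying that $\FP(f)$ is a covering array of strength $m-1$. Given $I\subseteq [n]$ with $|I|=m-1$ and a target pattern $a\in\{0,1\}^{m-1}$, I would split into three cases according to how $I$ meets the blades. If $v_0\notin I$ and $I$ is spread across at least two blades, then in each blade $i$ the number of coordinates in $I\cap B_i$ is at most $m-2$, so after assigning $a$ there is always at least one free non-hub variable per blade to be used to enforce the blade XOR condition, for any choice of $x_{v_0}$. If $v_0\notin I$ and $I$ equals the set of non-hub vertices of a single blade, then $x_{v_0}$ is forced to equal $\bigoplus a$, which is consistent and leaves the other blades free. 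If $v_0\in I$, then $|I\setminus\{v_0\}|=m-2$, so in every blade at most $m-2$ non-hub coordinates are fixed, leaving at least one free to realise the blade XOR condition with the prescribed value of $x_{v_0}=a_0$.

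The main subtlety, and the place I expect to spend the most care, is the case analysis above, in particular handling the boundary case where $I$ coincides with the non-hub vertices of one blade (there $x_{v_0}$ is not free, so one must check that the remaining blades can still be completed for \emph{any} $a$). Once this is settled, I verify that $G(f)$ really equals the windmill graph (each XOR genuinely depends on every claimed input), which is immediate, and conclude that $f$ witnesses the $(m-1)$-admissibility of a strongly connected digraph on $(m-1)k+1$ vertices.
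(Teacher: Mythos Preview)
Your proposal is correct and follows essentially the same approach as the paper: the paper also takes the windmill graph $W_{m,k}$ (the $k$ copies of $K_m$ glued at a single hub), puts the linear/XOR network on it, observes that the hub equation forces $x_{v_0}=0$ when $k$ is even but is vacuous when $k$ is odd, and then argues that the resulting fixed-point set has strength $m-1$. Your three-case completion argument is in fact more explicit than the paper's, which simply appeals to the fact that, once $x_{v_0}$ is fixed, the restriction of $\FP(f)$ to each blade is a strength-$(m-2)$ covering array.
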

\begin{proof}
    We know that cliques achieve high $k$-independence with linear
    functions. Our next construction is built upon this idea. Let $W_{m,k} = (V, E)$ be a graph with $n = (m-1)k+1$ vertices, comprising a central vertex and $k$ copies of $K_m$, each sharing only the central vertex. Examples of these graphs are shown in Figure \ref{windmills}.

We claim that for every $m,k$ with odd $k$, the linear Boolean network with interaction graph $W_{m,k}$ is $(m-1)$-independent. To prove this, we will first characterize the set of fixed points of this network. To do so, we denote by $f$ the linear BN with $G(f)=  W_{m,k}$, by $1$ the central vertex of the graph, and let $x\in \FP(f)$. Now, we distinguish the following two cases:
\begin{itemize}
    \item If $x_1 = 0$, then we need that the central vertex observes an even number of ones. 
    \item If $x_1 = 1$, then we need for it to observe an odd number of ones.
\end{itemize}
On the other hand, each of the cliques of size $m$ must have an even number of ones; otherwise, the configuration would be unstable. We denote by $K_{m-1}^1,\ldots, K_{m-1}^k$. Then, the set of fixed points of $f$ is given by the configurations that have $x_1=0$ and for every $\ell\in\{1,\ldots, k\}$, $w_H(x_{K_{m-1}}^\ell)$ is even
or $x_1 = 1$ and for every $\ell\in\{1,\ldots, k\}$, $w_H(x_{K_{m-1}}^\ell)$ is odd. Here we note that if $k$ is even, the central vertex cannot take the value 1 on a fixed point, because it will always observe an even number of ones. We can summarize the set of fixed points in the following table:

\begin{table}
\centering
\begin{tabular}{||c c c c c||} 
 \hline
 $1$ & $K_{m-1}^1$ &$K_{m-1}^2$  & $\cdots$ &$K_{m-1}^k$\\ [0.5ex] 
 \hline\hline
 0 & even & even & even & even\\
 $\vdots$ & $\vdots$ &  $\vdots$ &  $\vdots$ &  $\vdots$\\
 0 & even & even & even & even\\
 1 & odd & odd & odd & odd\\
 $\vdots$ & $\vdots$ &  $\vdots$ &  $\vdots$ &  $\vdots$\\
 1& odd & odd & odd & odd\\ [1ex] 
 \hline
\end{tabular}
\caption{Fixed points of $W_{m,k}$ with linear interaction, $k$ odd.}
\label{table:fxwmk}
\end{table}
Considering that for each $K_{m-1}^{\ell}$ there are $2^{m-2}$ possible configurations with even (or odd) weight, we have $2^{(m-2)k}$ fixed points with $x_1 = 0$ and the same amount with $x_1 = 1$. Thus, $f$ has $2^{(m-2)k+1}$ fixed points. Moreover, this set has strength $m-1$. Indeed, let $I$ be a subset of $m-1$ vertices from $W_{m,k}$ and let $a = (a_1, a^{K_{m-1}^{i_1}},\ldots, a^{K_{m-1}^{i_t}})\in\{0,1\}^{m-1}$, with $t\leq k$. We know that  the set of fixed points, for $x_1 = 0$ (or $x_1 = 1$) restricted to any $K_{m-1}^{\ell}$ is a covering array of strength $m-2$. Then, there exists a fixed point $x$ such that $x_I = a$, so $\FP(f)$ is a covering array of strength $m-1$.
\end{proof}

\begin{figure}
\centering
  \includegraphics[scale = 0.65]{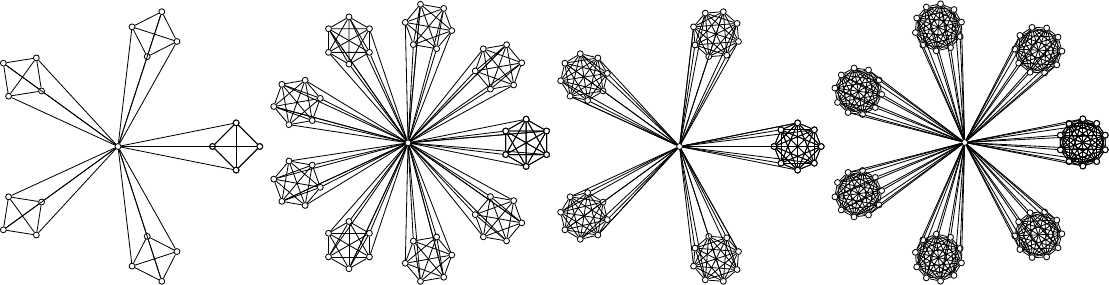}
\caption{Windmill graphs with $(m,k) \in\{(5,5),(7,9),(9, 5),(11,7)\}$ (left to right).}
  \label{windmills}

\end{figure}

\subsection{Constructions}\label{construcciones}
From the results of the previous section, we can observe a trade-off between the parameters $m$, $n$, and $k$ in an element of $CA(m,n;k)$. We aim to understand how to grow one of these parameters in terms of another, focusing on the context of a $k$-independent Boolean network on $n$ variables, with $m$ fixed points and $i(f) = k$. In addition, we will translate these results into constructions of $k$-admissible graphs.

The following result allows us to increase $n$ by one while maintaining strength in a certain sense.

\begin{lemma}[See e.g. ~\cite{LJKRLYKRFM2011}]\label{lema:incrementarn}
    Let $A\in CA(m_1, n-1; k)$ and $B\in CA(m_2, n-1;k-1)$. Then, 
    \begin{align*}
        C = \begin{bmatrix}
            A&\Vec{0}\\
            B&\Vec{1}
        \end{bmatrix} \in CA(m_1+m_2, n; k).
    \end{align*}
\end{lemma}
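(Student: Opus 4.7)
The plan is to verify directly from the definition that for every $k$-subset $I \subseteq [n]$ and every target $a \in \{0,1\}^k$, some row of $C$ has $I$-projection equal to $a$. The natural case split is on whether the newly adjoined column, indexed by $n$, belongs to $I$.

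If $n \notin I$, then $I \subseteq [n-1]$ has size $k$, so I would invoke the $k$-strength of $A$ to obtain a row of $A$ whose projection onto $I$ equals $a$; that row, extended by the appended $0$ in column $n$, is the desired row of $C$.

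If $n \in I$, write $I = I' \cup \{n\}$ with $|I'| = k-1$ and $a = (a', a_n)$. When $a_n = 1$, the required row must come from the bottom block $[B \mid \Vec{1}]$; since $B$ has strength $k-1$ and $I' \subseteq [n-1]$ has size $k-1$, a suitable row of $B$ exists immediately by hypothesis. When $a_n = 0$, the row must instead come from the top block $[A \mid \Vec{0}]$, so I would appeal to the elementary fact that a $k$-strength covering array is automatically $(k-1)$-strength (extend $I'$ arbitrarily to a $k$-subset of $[n-1]$ and $a'$ to any length-$k$ vector, then apply the strength-$k$ property of $A$), which supplies a row of $A$ projecting to $a'$ on $I'$.

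I do not anticipate any real obstacle: the entire argument is a direct case check. The only mildly subtle observation, and the reason the hypotheses on $A$ and $B$ are asymmetric, is that in the $a_n = 0$ subcase we only consume the $k$-strength of $A$ at level $k-1$, whereas the $a_n = 1$ subcase relies fully on the $(k-1)$-strength of $B$; this explains why weakening the hypothesis on $B$ any further would break the argument.
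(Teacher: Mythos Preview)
Your proposal is correct and follows essentially the same approach as the paper's proof: the identical case split on whether $n\in I$, and within the case $n\in I$ the same subcases on $a_n$. The paper is slightly terser in the $a_n=0$ subcase (it just appeals to ``$A$ has strength $k$'' without spelling out the reduction to strength $k-1$), but your added remark is exactly the right justification.
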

\begin{proof}
    Let $I = \{i_1,\ldots, i_k\}\subseteq [n]$ and $a = (a_1,\ldots, a_k)\in\{0,1\}^k$. Now there are two possible cases. If $n\not\in I$ since $A$ is a covering array of strength $k$, there is a vector $x\in C$ such that $x_I = a$. In the other case $n\in I$, and we write without loss of generality $I = \{i_1,\ldots, i_{k-1}, n\}$ and $a = (a_{i_1}, \ldots, a_{i_{k-1}}, a_n)$. If $a_n = 0$, since $A$ has strength $k$ there exists $x\in C$ such that $x_I = a$. Otherwise, if $a_n = 1$, as $B$ has strength $k-1$, there is a vector $y\in C$ such that $y_{I\setminus \{n\}}  = (a_{i_1},\ldots, a_{i_{k-1}})$, and therefore $y_I = a$.
\end{proof}

\begin{remark}\label{remark:incrementarn}
In the previous lemma, if we also assume $B \not\in CA(m_2, n-1; k)$, then $C$ is not an element of $CA(m_1 + m_2, n; k+1)$. Indeed, let $I \subseteq [n-1]$ and $a \in \{0,1\}^k$ be such that there is no $ x \in B $ with $x_I = a $. Consider $ \tilde{I} = I \cup \{n\}$ and $ \tilde{a} \in \{0,1\}^{k+1} $ such that $\tilde{a}_I = a $ and $ a_n = 1 $. Then, there is no $x \in C $ with $ x_{\tilde{I}} = \tilde{a} $, and therefore, $C$ does not have strength $k+1$.
\end{remark}

In terms of $k$-independent networks, Lemma \ref{lema:incrementarn} and Remark \ref{remark:incrementarn} allows us to establish the following result.

\begin{proposition}\label{prop: unirgrafos}
    Let $f$ be a Boolean network on $n-1$ variables with $i(f) = k-1$. Then, there exists a Boolean network $g$ on $n$ variables, with $i(g) = k$ and $G(g)$ connected such that $\FP(g)_{[n-1]} :=\{(x_1,\ldots, x_{n-1})\in\{0,1\}^{n-1}\,:\, (x_1,\ldots, x_{n-1}, x_n)\in \FP(g)\}$ contains the set of fixed points of $f$.
\end{proposition}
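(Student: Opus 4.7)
The plan is to apply Lemma~\ref{lema:incrementarn} to construct the desired fixed-point set combinatorially, and then to realize it as the fixed-point set of an explicit Boolean network. I would take $A = \{0,1\}^{n-1}$, which trivially belongs to $\CA(n-1;k)$ for any $k\leq n-1$, and $B=\FP(f)\in\CA(n-1;k-1)$. The lemma then yields
\[
C := \bigl\{(a,0):a\in\{0,1\}^{n-1}\bigr\}\cup\bigl\{(b,1):b\in\FP(f)\bigr\}\in\CA(n;k).
\]
Since $i(f)=k-1$ gives $\FP(f)\notin\CA(n-1;k)$, Remark~\ref{remark:incrementarn} implies $C\notin\CA(n;k+1)$. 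Hence every Boolean network $g$ with $\FP(g)=C$ automatically satisfies $i(g)=k$, and the projection $\FP(g)_{[n-1]}=\{0,1\}^{n-1}$ trivially contains $\FP(f)$.

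To realize $C$ as a fixed-point set, for $i\in[n-1]$ I would define
\[
g_i(x_1,\ldots,x_n) = \begin{cases} x_i, & x_n = 0, \\ f_i(x_1,\ldots,x_{n-1}), & x_n = 1, \end{cases}
\]
together with $g_n(x)$ equal to $1$ if and only if $x_n=1$ and $(x_1,\ldots,x_{n-1})\in\FP(f)$. A case analysis on $x_n$ then confirms $\FP(g)=C$: the slice $x_n=0$ is fixed coordinate-wise; on the slice $x_n=1$ with $x_{[n-1]}\in\FP(f)$ each $g_i=f_i(x_{[n-1]})=x_i$ and $g_n=1=x_n$; on the slice $x_n=1$ with $x_{[n-1]}\notin\FP(f)$ at least one $g_i$ disagrees with $x_i$, so $x$ is not fixed.

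The delicate step, and the main obstacle, is guaranteeing that $G(g)$ is weakly connected. The construction above provides the arcs of $G(f)$ on $[n-1]$, a loop at every vertex, an arc $n\to i$ whenever $f_i$ is not the projection $x\mapsto x_i$, and an arc $j\to n$ whenever the indicator of $\FP(f)$ depends on coordinate~$j$. If $G(f)$ happens to be disconnected or certain $n$-incident arcs fail to appear, I would exploit the freedom to redefine $g$ on the non-fixed-point slab $X=\{x : x_n=1,\; x_{[n-1]}\notin\FP(f)\}$, which is nonempty whenever $k<n$. On $X$ the only constraint is $g(x)\neq x$; since every $x\in X$ already has some coordinate $i\in[n-1]$ with $f_i(x_{[n-1]})\neq x_i$, one can alter the remaining coordinates of $g$ on $X$ to introduce whichever arcs are still missing while preserving the non-fixed condition. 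The delicate combinatorial check is that these local modifications can be made simultaneously, creating every missing arc without spawning new fixed points; a case analysis using that $X$ is large when $k<n$ closes the argument.
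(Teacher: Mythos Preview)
Your overall structure matches the paper's: invoke Lemma~\ref{lema:incrementarn} and Remark~\ref{remark:incrementarn} with $B=\FP(f)$ to build a strength-$k$-but-not-$(k{+}1)$ array, then realise it by a network that switches on $x_n$. The difference is in the choice of $A$. You take $A=\{0,1\}^{n-1}$, i.e.\ the identity network on the slice $x_n=0$. The paper instead takes $A=\FP(\tilde f)$ where $\tilde f$ is one of the networks supplied by Proposition~\ref{completegraph}, with $i(\tilde f)=k$ and $G(\tilde f)=K_{n-1}$, and sets
\[
g_i(x)=(x_n\land f_i(x))\lor(\overline{x_n}\land\tilde f_i(x)),\qquad g_n(x)=x_n.
\]

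This single change is exactly what makes your connectedness argument awkward while the paper's goes through for free. With $G(\tilde f)=K_{n-1}$, each $g_i$ for $i<n$ depends on every coordinate in $[n-1]\setminus\{i\}$ just by restricting to the slice $x_n=0$; and since $i(f)=k-1\neq k=i(\tilde f)$ forces $f\neq\tilde f$, at least one $g_i$ also depends on $x_n$. Hence $G(g)$ contains $K_{n-1}$ together with an arc $(n,i)$, and connectedness is immediate. Your construction, by contrast, inherits on $[n-1]$ only the arcs of $G(f)$ plus loops, so if $G(f)$ is itself disconnected you are pushed into the ad hoc ``redefine $g$ on the non-fixed slab $X$'' manoeuvre. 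That manoeuvre may well be completable, but as written it is only a sketch (``a case analysis \dots\ closes the argument''), and the paper's device of borrowing a complete-graph network from Proposition~\ref{completegraph} for the $x_n=0$ layer eliminates the issue entirely.
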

\begin{proof}
    Let $\tilde f:\{0,1\}^{n-1}\to\{0,1\}^{n-1}$ such that $i(f) = k$ (exists by Proposition \ref{completegraph}).  Now, define 
    \begin{align*}
g_i(x) = (x_{n} \land  f_i(x)) \lor (\overline{x_{n}} \land \tilde f_i(x)), \,\,\, i\in\{1,\ldots,n-1\},
\end{align*}
and $g_{n}(x) = x_{n}$. Note that if $x_{n} = 0$, then $g(x) = \tilde f(x)$, while if $x_{n} = 1$, $g(x) =  f(x)$. So, the set of fixed points of $g$ is
    \begin{align*}
        \FP(g) = \begin{bmatrix}
            \FP(\tilde f) & \Vec{0}\\\FP(f)&\Vec{1}
        \end{bmatrix}
    \end{align*}
    And by Lemma \ref{lema:incrementarn} and Remark \ref{remark:incrementarn}, $\FP(g)\in CA(n; k)\setminus CAN(n; k+1)$ and therefore $i(g) =  k$. Moreover, if we suppose $\FP(f)$ and $\FP(\tilde f)$ are disjoint we can avoid the loop in $n$ by repeating the previous argument with $g_{n}(x)$ as the indicator function of $\FP( f)$, i.e., $g_n(x) = 1$ if $x\in \FP(f)$ and $g_n(x) = 0$ if $x\in \FP(\tilde f)$.
\end{proof}

\begin{remark}
    We can also state Proposition \ref{prop: unirgrafos} in the following manner: Given $G^1, G^2$ to graphs on $V = [n]$, such that $G^1$ is $k$-admissible and $G^2$ is $(k-1)$-admissible, then we can construct $\tilde G = (\tilde V,\tilde E)$, where $\tilde V = [n+1]$ and $\tilde E = E(G^1)\cup E(G^2)$. Thus, by the previous proposition, we can define the same network and conclude that $\tilde G$ is a $k$-admissible graph on $n+1$ vertices. In Figure \ref{construccionunion}, we observe an example of this construction considering the Maj network in $G^1$, being $2$-independent, and the linear network in $G^2$ achieving $3$-independence. In this case, $\tilde G$ is the resulting graph, which turns out to be $3$-admissible with the network defined in Proposition \ref{prop: unirgrafos}.
\end{remark}

\begin{figure}
\centering
  \includegraphics[scale = 0.4]{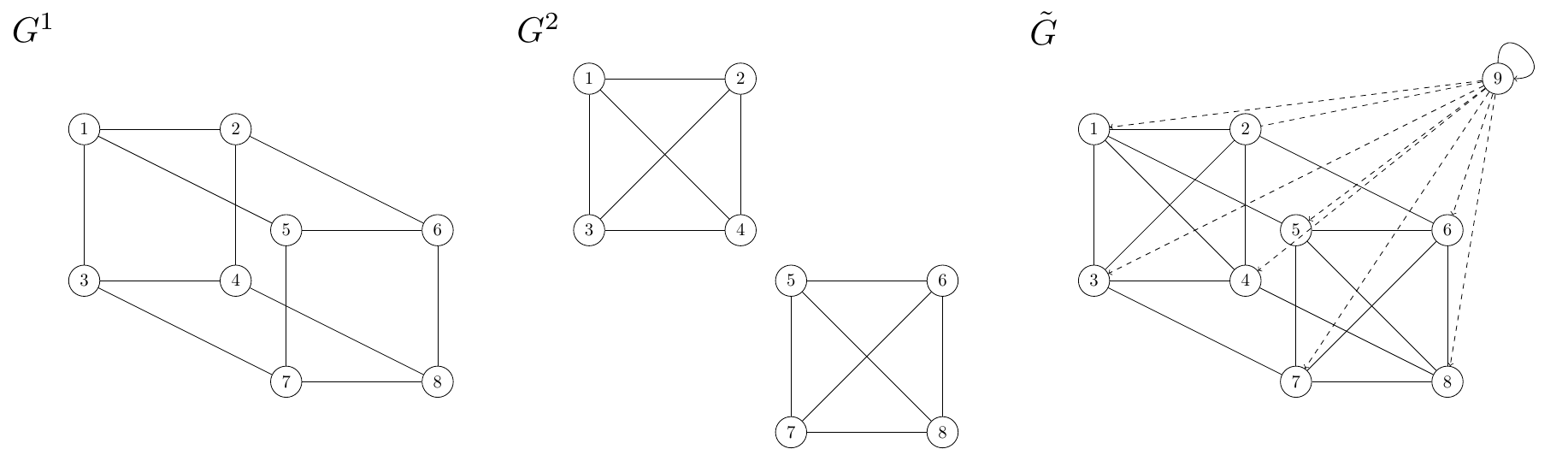}
\caption{Construction from Proposition \ref{prop: unirgrafos} using $G^1$ with majority and $G^2$ with linear functions.}
  \label{construccionunion}

\end{figure}
The following remark shows that by adding an isolated loop, we can increase $n$ by one while maintaining the strength. This, in turn, implies doubling the value of $m$, i.e., the number of fixed points.

\begin{remark}\label{prop:increasevertex}
    Given a $k$-admissible graph on $n$ vertices, $G$, the addition of an isolated loop would return a $k$-admissible graph on $n+1$ vertices. Indeed, let $f$ be a $k$-independent BN with interaction graph $G$. Now we define $\tilde f:\{0,1\}^{n+1}\to\{0,1\}^{n+1}$ as $\tilde f(x) = (f_1(x),\ldots, f_n(x), x_{n+1})$. So $G(\tilde f) = \tilde G$, and also
    \begin{align*}
        \FP(\tilde f) = \begin{bmatrix}
            \FP(f) & \Vec{0}\\
            \FP(f) &\Vec{1}
        \end{bmatrix}
    \end{align*}
    Now, by Lemma \ref{lema:incrementarn},     $\FP(\tilde f)\in CA(2|\FP(f)|, n+1; k)$. This construction also allows us to use cliques with linear functions and isolated loops to construct, for any $n$ and $k$, Boolean networks with $i(f) = k$, and non-complete interaction graph. Additionally, if $n$ is a multiple of $k$, incorporating disjoint copies of cliques of size $k$ into this construction results in a $(k-1)$-regular, $(k-1)$-admissible graph on $n$ vertices.

\end{remark}

After recognizing that the inclusion of loops doubles the number of fixed points, we wonder: Can we construct examples of networks with $i(f) = k$ and the maximum number of fixed points without increasing the strength? To advance in this direction, we first prove the following upper bound.

\begin{proposition}\label{upperbound}
    Let $A\in CA(n;k)\setminus CA(n;k+1)$. Then, an upper bound for the number of elements of $A$ is
    \begin{align*}
        2^{n-1}(2-2^{-k})
    \end{align*}
\end{proposition}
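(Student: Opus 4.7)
The plan is to exploit the hypothesis that $A$ fails to have strength $k+1$, which gives a concrete ``forbidden pattern'' that forces many elements of $\{0,1\}^n$ to be excluded from $A$. In more detail, since $A \notin CA(n;k+1)$, by the very definition of strength there must exist a set of indices $I \subseteq [n]$ with $|I| = k+1$ and a vector $a \in \{0,1\}^{k+1}$ such that no $x \in A$ satisfies $x_I = a$. I would name this pair $(I,a)$ as the first step.

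Next, I would count the number of vectors in $\{0,1\}^n$ whose restriction to $I$ equals $a$: the coordinates in $I$ are fixed (there are $k+1$ of them), and the remaining $n-k-1$ coordinates are free, giving exactly $2^{n-k-1}$ such vectors. Since $A \subseteq \{0,1\}^n$ and $A$ contains none of these vectors, we get
\begin{equation*}
|A| \;\leq\; 2^n - 2^{n-k-1} \;=\; 2^{n-1}\bigl(2 - 2^{-k}\bigr),
\end{equation*}
which is the claimed bound.

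There is no real obstacle here: the proof is a single forbidden-pattern counting argument, and the arithmetic $2^n - 2^{n-k-1} = 2^{n-1}(2 - 2^{-k})$ is immediate. The only thing worth remarking on, if one wanted to, is that the bound is attained by taking $A = \{0,1\}^n$ minus the fiber over a single $(k+1)$-pattern, which is automatically in $CA(n;k)\setminus CA(n;k+1)$ as long as $n \geq k+1$. This observation could be used to motivate the subsequent discussion of constructions that meet this upper bound.
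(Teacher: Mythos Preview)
Your proof is correct and follows essentially the same argument as the paper: pick a missing $(k+1)$-pattern, count the $2^{n-k-1}$ vectors in its fiber that are therefore excluded from $A$, and subtract from $2^n$. Your additional remark on tightness is also valid, though the paper does not include it in the proof itself.
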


\begin{proof}
    Since $A$ has no strength $k+1$, there exists $a = (a_1,\ldots, a_{k+1})\in \{0,1\}^{k+1}$ such that for any vector we select as a completion $b = (b_{k+2}, \ldots, b_n)\in \{0,1\}^{n-k-1}$, the concatenation $ab = (a_1,\ldots, a_{k+1},b_{k+2},\ldots, b_n)\in\{0,1\}^n$ is not an element of $A$. Therefore, there are at least $2^{n-k-1}$ elements that are not part of the rows of $A$, so the upper bound is $2^n-2^{n-k-1} = 2^{n-1}(2-2^{-k})$.
\end{proof}

Now consider a graph $G$ composed by a clique of size $k+1$ and $n-k-1$ isolated loops. Suppose we have a linear Boolean network with this interaction graph. Then, by the previous results, we know that $i(f) = k$. The inclusion of loops does not increase the strength, as the configuration $\Vec{1} \in \{0,1\}^{k+1}$ remains unstable for the isolated clique. Then, since every loop duplicates the set of fixed points, we conclude that $f$ has $2^{n-k-1}2^{k} = 2^{n-1}$ fixed points. This result demonstrates that, for a fixed strength $k$, we can approach the bound from Proposition \ref{upperbound} closely (up to a constant in terms of $k$)

\begin{corollary}\label{prop: maximumnumbervectors}
       For every $k\leq n$, there is a Boolean network with $i(f) = k$ and $2^{n-1}$ fixed points.
\end{corollary}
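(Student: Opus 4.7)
The plan is to make explicit the construction sketched in the paragraph preceding the corollary, justifying each step by appeal to the earlier results in the section. Concretely, I would fix $k\leq n$ and build an interaction graph $G$ on vertex set $[n]$ consisting of a clique $K_{k+1}$ on the vertices $\{1,\ldots,k+1\}$ (without loops) together with $n-k-1$ isolated loops on the vertices $\{k+2,\ldots,n\}$. On this graph I would place the linear Boolean network: $f_i(x)=\bigoplus_{j\in N^-(i)} x_j$ for $i\in[n]$.

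Next I would identify $\FP(f)$. On the clique component, Proposition \ref{completegraph} tells us that the fixed points of the linear network on $K_{k+1}$ are exactly the vectors of $\{0,1\}^{k+1}$ of even Hamming weight, a covering array of strength $k$ with $2^k$ elements. On each isolated looped vertex, $f_i(x)=x_i$ forces no constraint, so both values $0$ and $1$ are admissible. Therefore
\begin{equation*}
\FP(f)=\{y\in\{0,1\}^{k+1}:w_H(y)\text{ even}\}\times\{0,1\}^{n-k-1},
\end{equation*}
which has cardinality $2^k\cdot 2^{n-k-1}=2^{n-1}$. This matches the desired count.

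It remains to check that $i(f)=k$, i.e., that $\FP(f)\in CA(n;k)\setminus CA(n;k+1)$. The membership in $CA(n;k)$ can be obtained by iterating Remark \ref{prop:increasevertex}: starting from the $k$-independent network on $K_{k+1}$ and successively adding isolated loops preserves the strength while doubling the number of fixed points, so after $n-k-1$ steps one lands in $CA(2^{n-1},n;k)$. The failure to reach strength $k+1$ follows from the clique component: choose $I=\{1,\ldots,k+1\}$ and $a=\vec{1}\in\{0,1\}^{k+1}$; no fixed point has $x_I=a$, because the first $k+1$ coordinates of every fixed point form an even-weight vector and $k+1$ ones is odd when $k$ is even, while if $k$ is odd one instead picks $a$ to be the all-ones vector of weight $k+1$ (still odd-parity relative to the even-parity constraint)—wait, if $k+1$ is even then $\vec 1$ has even weight and is a fixed point projection; in that case I would instead take $I=\{1,\ldots,k,k+2\}$ with $a=(1,\ldots,1,1,1)$: since the first $k+1$ coordinates of any fixed point must have even weight, no fixed point has its first $k$ coordinates all $1$ while coordinate $k+1$ is forced by parity—so an appropriate $a\in\{0,1\}^{k+1}$ witnessing strength failure exists in either parity case. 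This is the only delicate point of the argument; everything else is bookkeeping.

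No obstacle is substantial: the construction reuses Proposition \ref{completegraph}, Remark \ref{prop:increasevertex}, and Lemma \ref{lema:incrementarn} essentially verbatim. The only care needed is to exhibit, for each parity of $k$, an explicit $(k+1)$-subset of coordinates and a target pattern not realized by any fixed point, in order to conclude $i(f)=k$ rather than just $i(f)\geq k$.
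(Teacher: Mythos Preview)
Your construction is exactly the one the paper uses: a linear network on a loopless $(k+1)$-clique together with $n-k-1$ isolated loops, yielding $2^{k}\cdot 2^{n-k-1}=2^{n-1}$ fixed points via Proposition~\ref{completegraph} and Remark~\ref{prop:increasevertex}. So the approach is correct and identical to the paper's.

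The one genuine slip is in your argument for $i(f)<k+1$. When $k$ is odd you propose $I=\{1,\ldots,k,k+2\}$ with $a=(1,\ldots,1)$ and claim no fixed point realizes this pattern. That is false: coordinate $k+2$ is a free loop, and the vector with $x_1=\cdots=x_k=1$, $x_{k+1}$ set to whatever makes the first $k+1$ coordinates have even weight, and $x_{k+2}=1$ is a fixed point with $x_I=a$. The repair is much simpler than your attempted case split: keep $I=\{1,\ldots,k+1\}$ and take \emph{any} odd-weight $a\in\{0,1\}^{k+1}$, for instance $a=(1,0,\ldots,0)$. Since the projection of $\FP(f)$ onto the clique coordinates is exactly the set of even-weight vectors of length $k+1$, no fixed point hits an odd-weight $a$, and this works uniformly in the parity of $k$. (The paper's own sketch invokes $a=\vec 1$, which strictly speaking also only works when $k$ is even; the observation above patches both the paper and your proposal at once.)
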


\begin{figure}[H]
\centering
  \includegraphics[scale = 0.8]{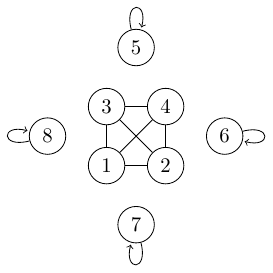}
\caption{Construction from Proposition \ref{prop: maximumnumbervectors} with $n = 8$ and $k = 3$.}

\end{figure}

By using a different approach, the following result allows us to significantly increase $n$ while keeping the strength controlled.

\begin{lemma}\label{Lema:directproduct}
    Let $A \in CA(m_s, n_s; s)$ and $B\in CA(m_r, n_r; r)$. We denote by $A\otimes B$ the set of all possible concatenations between a vector of $A$ and a vector of $B$:
        \begin{align*}
            A\otimes B = \{a_ib_j\in \{0,1\}^{n_s+n_r}\,:\, i,j\in [s]\times [r]\}.
        \end{align*}
    Then, $A\otimes B\in CA(m_sm_r, n_s+n_r; t)$, where $t = \min\{r,s\}$.
\end{lemma}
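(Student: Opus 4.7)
The plan is to reduce the claim for $A\otimes B$ to the defining covering-array properties of $A$ and $B$ separately, after an appropriate splitting of the index set.

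First I would fix $t=\min\{r,s\}$, pick an arbitrary index set $I\subseteq[n_s+n_r]$ with $|I|=t$, and an arbitrary target vector $a\in\{0,1\}^t$. The natural move is to partition $I$ according to whether its indices fall in the ``$A$-part'' $[n_s]$ or the ``$B$-part'' $\{n_s+1,\ldots,n_s+n_r\}$. Write $I_A=I\cap[n_s]$ and let $I_B$ be the set obtained from $I\setminus I_A$ by shifting the indices down by $n_s$, so that $I_B\subseteq[n_r]$. Set $t_1=|I_A|$ and $t_2=|I_B|$, so $t_1+t_2=t$. Split the target $a$ correspondingly into $a^A\in\{0,1\}^{t_1}$ and $a^B\in\{0,1\}^{t_2}$.

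The key observation is the elementary fact that any element of $CA(n;k)$ is also an element of $CA(n;k')$ for every $k'\leq k$: projecting onto a subset of indices of size $k'<k$ still hits every vector in $\{0,1\}^{k'}$, since one can extend any such vector arbitrarily to a vector in $\{0,1\}^k$ and invoke strength $k$. Because $t_1\leq t\leq s$ and $t_2\leq t\leq r$, this means $A$ has strength $t_1$ and $B$ has strength $t_2$. Hence there exist $a_i\in A$ with $(a_i)_{I_A}=a^A$ and $b_j\in B$ with $(b_j)_{I_B}=a^B$. The concatenation $a_ib_j$ then lies in $A\otimes B$ and satisfies $(a_ib_j)_I=a$, establishing strength $t$.

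A final bookkeeping step records that $A\otimes B$ has exactly $m_s m_r$ rows (one for each pair $(i,j)$), so $A\otimes B\in CA(m_sm_r,n_s+n_r;t)$. I do not expect any real obstacle here: once the index-splitting is in place, the argument is essentially just ``apply each covering array to its half of $I$.'' The only point worth stating carefully is the downward-closure remark that a strength-$k$ covering array is automatically a strength-$k'$ covering array for $k'\leq k$, which is needed when $t_1<s$ or $t_2<r$.
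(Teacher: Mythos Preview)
Your proposal is correct and follows essentially the same approach as the paper: split the index set $I$ into its $A$-part and $B$-part, then apply the covering-array property of $A$ and $B$ separately (using downward closure of strength) to find the two vectors whose concatenation hits the target. The paper's proof is marginally terser---it assumes WLOG $t=s$ rather than tracking $t_1,t_2$---but the substance is identical.
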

\begin{proof}
    Without loss of generality, assume $t = s$. Let $I = \{i_1,\ldots, i_s\}\subseteq [n_s+n_r]$. Consider the partition of $I$ into $I_A$ and $I_B$, where $I_A$ contains the $\ell_A$ indices between $1$ and $n_s$, and $I_B$ contains the $\ell_B$ indices between $n_s+1$ and $n_s + n_r$. Let $a = a^Aa^B\in\{0,1\}^{n_s+n_r}$, where $a^A = (a^A_1,\ldots, a^A_{\ell_A})$ and $a^B = (a^B_1,\ldots, a^B_{\ell_r})$. Since $t = \min\{s,t\}$, we know that $A$ and $B$ are covering arrays of strength $s$. Thus, there exist $x\in A$ and $y\in B$ such that $x|_A = a^A$ and $y|_B = a^B$. As $A\otimes B$ contains all possible concatenations of elements between $A$ and $B$, we conclude that $xy\in A\otimes B$ and, therefore, $A\otimes B\in CA(m_sm_r, n_s+n_r; t)$.
\end{proof}
\begin{corollary}\label{corolario:directproduct}
    Let $\{A^\ell\}_{\ell = 1}^L$ be a collection of sets of Boolean vectors such that for every $\ell$,  $A^\ell$ is an element of $CA(m_\ell, n_\ell; t_\ell)$. Then, 
    $$\bigotimes_{\ell = 1}^L A^\ell = ((A^1\otimes A^2)\otimes A^3)\otimes \cdots \otimes A^L)\in CA(m, n;t)$$
    Where $m = \prod_{\ell = 1}^Lm_\ell$, $n_\ell = \sum_{\ell = 1}^L n_\ell$ and $t = \min\{t_\ell\, :\, \ell = 1,\ldots, L\}$.
\end{corollary}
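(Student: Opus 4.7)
The plan is to proceed by induction on $L$, using \Cref{Lema:directproduct} as the workhorse. The base case $L = 1$ is immediate (the set itself is trivially a covering array of the stated strength), and the case $L = 2$ is exactly \Cref{Lema:directproduct}.

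For the inductive step, assume the statement holds for any collection of $L-1$ sets. Define
\begin{equation*}
    B := \bigotimes_{\ell = 1}^{L-1} A^\ell,
\end{equation*}
and let $m' = \prod_{\ell=1}^{L-1} m_\ell$, $n' = \sum_{\ell=1}^{L-1} n_\ell$, and $t' = \min\{t_\ell : 1 \leq \ell \leq L-1\}$. By the inductive hypothesis, $B \in CA(m', n'; t')$. Then apply \Cref{Lema:directproduct} to the pair $B$ and $A^L$: the product $B \otimes A^L$ belongs to $CA(m' m_L,\, n' + n_L;\, \min\{t', t_L\})$. Since by definition $\bigotimes_{\ell=1}^L A^\ell = B \otimes A^L$, and
\begin{equation*}
    \min\{t', t_L\} = \min\bigl\{\min\{t_\ell : 1 \leq \ell \leq L-1\},\, t_L\bigr\} = \min\{t_\ell : 1 \leq \ell \leq L\} = t,
\end{equation*}
the conclusion follows with the stated parameters $m = m' m_L = \prod_{\ell=1}^L m_\ell$ and $n = n' + n_L = \sum_{\ell=1}^L n_\ell$.

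There is no real obstacle: the argument is a routine induction, and the only care needed is bookkeeping of the parameters $m_\ell, n_\ell, t_\ell$ so that the minimum is propagated correctly through the left-associated iterated product. One subtlety worth flagging is that the operation $\otimes$ is associative up to reordering of coordinates (and the covering-array property is invariant under column permutations), so the specific parenthesization $((A^1 \otimes A^2) \otimes A^3) \otimes \cdots$ fixed in the statement matches the inductive construction above without loss of generality.
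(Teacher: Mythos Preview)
Your proof is correct and is precisely the intended argument: the paper states this as an immediate corollary of \Cref{Lema:directproduct} without proof, and the routine induction on $L$ that you wrote out is exactly how one fills in the details.
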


\begin{remark}
Consider a family of Boolean networks $\{f_\ell\}_{\ell = 1}^L$ such that for each $\ell$, $G(f^\ell) = G_\ell$ and $i(f^\ell) = t_\ell$. Define the graph $G = \bigcup_{\ell = 1}^L G_\ell$ by
\begin{equation*}
    V(G) = \bigcup_{\ell = 1}^L V(G_\ell),\quad E(G) = \bigcup_{\ell = 1}^L E(G^\ell).
\end{equation*}Then, there exists a Boolean network $f$ such that $G(f) = G$ and $i(f) = k$, where $k = \min\{t_\ell \,:\, \ell = 1,\ldots, L\}$. Indeed, since $G$ is a disjoint union, we can define $f$ locally as $f^\ell$ for each $G_\ell$. Thus, the set of fixed points is of the form:
\begin{align*}
\FP(f) = \bigotimes_{\ell = 1}^L \FP(f^\ell)
\end{align*}
where each $\FP(f^\ell)$ is a covering array of strength $t_\ell$. Then, by Lemma \ref{Lema:directproduct}, this set is a covering array of strength $k = \min\{t_\ell \,:\, \ell = 1,\ldots, L\}$ with $\prod_{\ell = 1}^L |\FP(f^\ell)|$ elements.
\end{remark}

The preceding Remark shows that we can use Corollary \ref{corolario:directproduct} to, from a family of networks with certain degrees of $k$-independence, construct another one (increasing $n$ and $m$, and controlling $k$), with a disconnected interaction graph. The following result demonstrates that we can also carry out a similar construction, but while maintaining the interaction graph strongly connected.

\begin{proposition}\label{fuerteconexo1}
Let $\{f_\ell\}_{\ell = 1}^L$ be a family of Boolean networks such that for each $\ell$, $G(f^\ell) = G_\ell$ and $i(f^\ell) =  t_\ell$. Then, there is Boolean network $f$ with a strongly  connected interaction graph $G(f) = G$  and $i(f) = k$, where $k= \min\{t_\ell\,:\, \ell = 1,\ldots, L\}$.
\end{proposition}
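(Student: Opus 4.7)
I extend the product construction from the preceding remark by enriching it with a cyclic pattern of arcs linking the components, adjusting exactly $L$ local activation functions so that the fixed-point set is preserved. First, form the direct product $f = (f^1, \dots, f^L)$ on the disjoint union $G_1 \sqcup \cdots \sqcup G_L$; by the preceding remark $\FP(f) = \bigotimes_\ell \FP(f^\ell) \in CA(n; k)$, and in fact $i(f) = k$ exactly, since for $\ell^*$ with $t_{\ell^*} = k$ any forbidden pattern witnessing $\FP(f^{\ell^*}) \notin CA(n_{\ell^*}; k+1)$ lifts to one on $V(G)$. I may moreover assume each $G_\ell$ is itself strongly connected: if not, replace $G_\ell$ by $K_{n_\ell}$ and $f^\ell$ by a realization of $i = t_\ell$ on $K_{n_\ell}$ supplied by Proposition \ref{completegraph}.

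Choose a representative $v_\ell \in V(G_\ell)$ for each $\ell$ and a trigger $y^\ell \in \{0,1\}^{V(G_\ell)} \setminus \FP(f^\ell)$ whose fixedness failure is witnessed by some coordinate $j^* \in V(G_\ell) \setminus \{v_\ell\}$, that is, $f^\ell_{j^*}(y^\ell) \ne y^\ell_{j^*}$. Define $\tilde f_j = f_j$ for $j \notin \{v_1, \dots, v_L\}$ and, with indices mod $L$,
\begin{equation*}
\tilde f_{v_{\ell+1}}(x) = \begin{cases} 1 - f_{v_{\ell+1}}(x) & \text{if } x_{V(G_\ell)} = y^\ell, \\ f_{v_{\ell+1}}(x) & \text{otherwise.} \end{cases}
\end{equation*}

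Two things then remain to be checked. First, $\FP(\tilde f) = \FP(f)$: on any $x \in \FP(f)$ each $x_{V(G_\ell)} \in \FP(f^\ell)$ differs from $y^\ell$, so $\tilde f(x) = f(x) = x$; conversely, if $x \in \FP(\tilde f)$ and some $\ell$ satisfies $x_{V(G_\ell)} = y^\ell$, the unperturbed coordinate $j^*$ already violates $\tilde f_{j^*}(x) = x_{j^*}$, a contradiction, so no such $\ell$ exists and $x \in \FP(f)$. Hence $i(\tilde f) = k$. Second, $G(\tilde f)$ is strongly connected: flipping any $w \in V(G_\ell)$ of $y^\ell$ toggles which branch applies in $\tilde f_{v_{\ell+1}}$, so genuine arcs $V(G_\ell) \to v_{\ell+1}$ are introduced, closing a directed cycle $G_1 \to \cdots \to G_L \to G_1$ on top of the (already strongly connected) components.

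\textbf{Main obstacle.} The delicate point is guaranteeing the existence of a trigger $y^\ell$ whose fixedness failure is not concentrated solely at $v_\ell$. This can fail only in highly degenerate regimes (for instance when $v_\ell$ is essentially isolated in $G_\ell$, or when $f^\ell$ acts as the identity on all coordinates other than $v_\ell$); in those cases one either chooses $v_\ell$ differently inside $G_\ell$ or replaces $f^\ell$ by the Proposition \ref{completegraph} realization on $K_{n_\ell}$ realizing $i = t_\ell$, for which such triggers are abundant.
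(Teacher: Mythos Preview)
Your argument is correct in its essentials, but it takes a genuinely different route from the paper's.  The paper does not perturb a handful of local functions via triggers; instead it \emph{multiplies} every local function by fixed-point indicators of the other blocks.  Concretely, the paper sets
\[
f_i(x)=f_i^{1}(x_{G_1})\wedge C_2(x_{G_2})\wedge\cdots\wedge C_L(x_{G_L})\quad(i\in G_1),
\qquad
f_j(x)=f_j^{\ell}(x_{G_\ell})\wedge C_1(x_{G_1})\quad(j\in G_\ell,\ \ell\ge 2),
\]
where $C_\ell$ is the indicator of $\FP(f^\ell)$.  This yields a hub-and-spoke architecture (every vertex of $G_1$ receives arcs from every other block, and every vertex of each $G_\ell$ receives arcs from $G_1$), so strong connectivity follows without the within-block strong-connectivity assumption and without any replacement by $K_{n_\ell}$.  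The fixed-point set is then (up to possibly the all-zero vector) exactly $\bigotimes_\ell\FP(f^\ell)$, and the upper bound $i(f)\le k$ comes from the same projection argument you use.

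Your cyclic-perturbation approach is more parsimonious---it touches only $L$ coordinates and adds far fewer arcs---but it pays for this economy with the trigger-existence obstacle you flag.  Your fallback to Proposition~\ref{completegraph} covers $1\le t_\ell\le n_\ell-1$, so the case $t_\ell=n_\ell$ (where $f^\ell$ is forced to be the identity and no non-fixed point exists at all) is not handled; the paper's construction has the analogous blind spot, since then $C_\ell\equiv 1$ contributes no cross arcs.  In short: both proofs work under the same mild nondegeneracy, but the paper's indicator-product construction is blunter and avoids the case analysis around triggers that your more surgical construction requires.
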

\begin{proof}
    Define $G$ with vertex set $V:= \bigcup_{\ell = 1}^L V(G_\ell)$, $n = |V|$ and consider a Boolean network $f:\{0,1\}^n\to\{0,1\}^n$ such that for every $i\in G_1$, $f_i$ is defined by
    \begin{align*}
        f_i(x) = f_i^1(x_{G_1})\land C_2(x_{G_2})\land \cdots \land C_L(x_{G_L}),
    \end{align*}
    where $C_\ell(x) = 1$ if and only if $x_{G_\ell}\in \FP(f^\ell)$. We also define for every $\ell\in\{2,\ldots, L\}$, and for every $j\in G_\ell$,
    \begin{align*}
    f_j(x) = f_j^{\ell}(x_{G_\ell}) \land C_1(x_{G_1})
    \end{align*}
    Then, it is easy to see that 
    \begin{align*}
        \FP(f) = \bigotimes_{\ell = 1}^L \FP(f^\ell) \in CA(n; k).
    \end{align*}
  Finally, recall that we assume $i(f^\ell) = t_\ell$ for every $\ell$. Suppose, for contradiction, that $i(f) = k+1$. Consider $I = \{i_1, \ldots, i_{k+1}\} \subseteq V(G_\ell)$. For every $a \in \{0,1\}^{k+1}$, there would exist $x \in \FP(f^\ell)$ such that $x_I = a$, implying $i(f^\ell) \geq k+1 > t_\ell$, which contradicts our assumption.
\end{proof}

\section{The monotone case}\label{seccionmonotonas}

As we saw in Remark \ref{no son monotonas}, the general construction of Boolean networks with $n$ variables and $i(f) = k$ does not guarantee the existence of monotone $k$-independent networks. Similarly, the other constructions presented in the previous chapter do not provide results on the existence of $k$-admissible graphs with monotone networks. There have been previous studies on fixed points in monotone networks, but they do not consider the structure of the set of fixed points ~\cite{aracena2017number}. This theoretically motivates us to question whether monotone networks can be $k$-independent for some $1 < k < n$. Additionally, this question is interesting from an applied perspective, as networks modeling binary opinion exchange systems are often monotone. Therefore, we dedicate this section to studying the relationship between monotonicity and $k$-independence.

The following combinatorial design proves to be convenient when working with covering arrays and monotone networks.
\begin{definition}
     Let $A = \{x^1,\ldots, x^ m\}\subseteq \{0,1\}^n$. We say that $A$ is a Steiner system with parameters $(n, k, t)$ if $w_H(x^i) = k$ for $i 
 = 1,\ldots, m$, and for every subset of indices $I = \{i_1,\ldots, i_t\}$ there is an unique vector $x^j\in A$ such that $x^j_{i_\ell} = 1$ for $\ell\in\{1,\ldots, t\}$.
\end{definition}

   Given a set of indices $I = \{i_1, \ldots, i_t\}$ and values $a = (a_1, \ldots, a_t) \in \{0,1\}^t$, we say that a vector $x \in \{0,1\}^n$ such that $x_I = a$ is a completion of $a$. In this context, a Steiner system guarantees the uniqueness of the completion of the configuration $\Vec{1} \in \{0,1\}^t$ for any subset of $t$ indices.

As an example, the following is a Steiner system with parameters $(8,4,3)$:
    \begin{align*}
        A = \begin{matrix}
            11010001\\
            01101001\\
            00110101\\
00011011\\
10001101\\
01000111\\
10100011\\
00101110\\
10010110\\
11001010\\
11100100\\
01110010\\
10111000\\
01011100\\
        \end{matrix}
    \end{align*}

The existence of a Steiner system with given parameters has been a fundamental problem in combinatorics ~\cite{doyen1980updated}. In a broader context, divisibility conditions were established: for an $(n, q, r)$ Steiner system to exist, a necessary condition is that $\binom{q-i}{r-i}$ divides $\binom{n-i}{r-i}$ for every $0 \leq i \leq r-1$. For many years, it was conjectured that these divisibility conditions were also sufficient. This conjecture was proven in 2014 for large values of $n$ ~\cite{keevash2014existence}. See also ~\cite{glock2016existence}, ~\cite{barber2020minimalist}.

\begin{lemma}
Let $A$ be a Steiner system with parameters $(n, t+1,t)$ such that $2t<n$. Then, $A\in CA(n;t)\setminus CA(n;t+1)$.
\end{lemma}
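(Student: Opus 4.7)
The plan is to handle the two inclusions separately, with the failure of strength $t+1$ being a short counting observation, and the achievement of strength $t$ requiring an induction that leans on derived Steiner systems, with a single exact count as the base case.

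For $A \notin CA(n; t+1)$, note that every row of $A$ has Hamming weight exactly $t+1$, so realizing the all-ones pattern $\vec{1}$ on some $(t+1)$-index set $I'$ forces $I'$ itself to be a block. It therefore suffices to find a $(t+1)$-subset of $[n]$ that is not a block. A standard double counting of (block, $t$-subset)-incidences shows $|A| = \binom{n}{t}/(t+1)$, whereas the number of $(t+1)$-subsets of $[n]$ is $\binom{n}{t+1} = \frac{n-t}{t+1}\binom{n}{t}$. Since $n > 2t$ forces $n - t \geq 2$, there are strictly more $(t+1)$-subsets than blocks, so some non-block of size $t+1$ exists.

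For the positive direction, given $I \subseteq [n]$ with $|I| = t$ and $a \in \{0,1\}^t$, set $J = \{i \in I : a_i = 1\}$ and $K = I \setminus J$. Because blocks have weight exactly $t+1$, realizing $x_I = a$ is equivalent to finding a block $B \in A$ with $J \subseteq B$ and $B \cap K = \emptyset$. I would prove the following more general statement by induction on $T$: for every $(N, T+1, T)$-Steiner system with $N > 2T$ and every pair of disjoint $J, K \subseteq [N]$ with $|J| + |K| = T$, such a block exists. In the inductive step, when $J$ is nonempty, pick any $p \in J$ and pass to the derived Steiner system at $p$, namely $\{B \setminus \{p\} : p \in B \in A\}$, which is an $(N-1, T, T-1)$-Steiner system on $[N] \setminus \{p\}$ satisfying $N - 1 > 2(T-1)$. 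Applying the inductive hypothesis to $(J \setminus \{p\},\, K)$ yields a derived block $B'$, whence $B = B' \cup \{p\}$ answers the original instance.

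The main obstacle, and the substance of the base case of the induction at every level, is the remaining case $J = \emptyset$: producing a block disjoint from a prescribed $t$-set $I$. A naive union bound over $K$ is too weak (it would require $n > t(t+1)$), so I would compute the count exactly. Writing $\lambda_r = \binom{n-r}{t-r}/(t+1-r)$ for the number of blocks containing a fixed $r$-subset, inclusion--exclusion gives
\[
|\{B \in A : B \cap I = \emptyset\}| = \sum_{r=0}^{t}(-1)^r \binom{t}{r}\lambda_r.
\]
Using the algebraic identity $\binom{t}{r}/(t+1-r) = \binom{t+1}{r}/(t+1)$ and appending the vanishing $r = t+1$ term, this rewrites as
\[
\frac{1}{t+1}\sum_{r=0}^{t+1}(-1)^r \binom{t+1}{r}\binom{n-r}{t-r} = \frac{1}{t+1}\binom{n-t-1}{t},
\]
where the inner sum is the standard inclusion--exclusion count of $t$-subsets of $[n]$ disjoint from a fixed $(t+1)$-set. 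Since $n > 2t$ is equivalent to $n - t - 1 \geq t$, this count is strictly positive, closing the induction and completing the proof.
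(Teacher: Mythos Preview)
Your argument is correct and takes a genuinely different route from the paper's. For the failure of strength $t+1$, the paper argues by contradiction from uniqueness (a block realizing the all-ones pattern on a $(t+1)$-set $I$ together with a second block realizing a weight-$t$ pattern on $I$ would give two distinct blocks through a common $t$-subset), whereas you use a direct pigeonhole comparison of $|A| = \binom{n}{t}/(t+1)$ against $\binom{n}{t+1}$. For strength $t$, the two inductions run in opposite directions: the paper inducts on the number of \emph{zeros} in the target pattern $a$, starting from the all-ones configuration (immediate from the Steiner property) and forcing each additional zero by selecting an auxiliary index $w$ outside the current support and invoking uniqueness to exclude the unwanted value; you instead peel off the \emph{ones}, passing to the derived $(N-1, T, T-1)$ system whenever $J \neq \emptyset$, so that the only case needing real work is the all-zeros pattern $J = \emptyset$, which you settle with the exact inclusion--exclusion count $\tfrac{1}{T+1}\binom{N-T-1}{T} > 0$. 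Your approach leans on standard design-theoretic machinery (derived designs and the closed-form replication numbers $\lambda_r$) and delivers an exact enumeration as a bonus; the paper's approach is more self-contained, using nothing beyond the defining uniqueness of a Steiner system, at the price of a more delicate inductive step.
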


\begin{proof}
    Let $I$ be a subset of $[n]$ of size $t$, we will assume without loss of generality that $I = \{1,\ldots, t\}$. We aim to prove that for every $a = (a_1,\ldots, a_t)\in\{0,1\}^t$, there exists  $x\in A$ such that $x_I = a$. We will proceed with the proof by induction on the number of zeros in $a$.

First, observe that there exists $x^\ell\in A$ such that $x^\ell_I = 11\cdots 1$, due to the property of Steiner systems. As the vectors have weight $t+1$, there exists a unique $w\in\{t+1,\ldots, n\}$ such that $x^\ell_{w} = 1$.
Let $i_0 \in I$ and let $\overline{e_{i_0}} \in \{0,1\}^t$ be the vector that has a single zero at position $i_0$ and define $K^{i_0} =  \left(\{1,\ldots, t\}\setminus\{i_0\}\right)\cup\{w\}$. Notice  $K^{i_0}$ is a subset of $t$ indices, so there exists a vector $x^{\ell_1}\in A$ that has ones in the components $K^{i_0}$. Suppose $x^{\ell_1}_{i_0} = 1$. In such case, $x^\ell$ and $x^{\ell_1}$ would be two vectors in $A$ that has ones in $I$, which contradicts the definition of a Steiner system.
 Therefore, $x^{\ell_1}_{i_0}$ must be zero, and hence $x^{\ell_1}_I = \overline{e_{i_0}}$. With this, we proved that given a subset of $t$ indices, all configurations with one zero and $t-1$ ones appear.

Now, suppose that all configurations with $s$ zeros appear, and let us prove that those with $s+1$ zeros also appear. Let $a = (a_1,\ldots, a_t)\in \{0,1\}^t$ such that $a_1= \cdots = a_{s+1} = 0$ and $a_{s+2} = \cdots a_t = 1$. We will prove that there exists an element of the Steiner system that takes the values of $a$ at the indices $I$. Consider the vector $x^{s}$ that completes the configuration $b = (b_1,\ldots,b_t)$ with values $b_1 = \cdots = b_{s} = 0$, $b_{s+1} = \cdots = b_t = 1$ (which exists by the induction hypothesis).
Now, let $J = \{\ell\in{t+1,\ldots,n} \, :\, x^s_\ell = 1\}$. As the vectors of the Steiner system have weight $t+1$, $|J| = s+1$. We denote $J = \{j_1,\ldots, j_s, j_{s+1}\}$, and consider $w\in \{t+1, \ldots, n\}\setminus J$, which allows us to define $K^i =  (\{j_1,\ldots, j_s\}\cup\{w\})\cup\{s+2, \ldots, t\}$, which is a subset of $[n]$ of size $t$, so there exists $y\in A$ that takes the value one in the components indexed by $K^i$, and also has another component with value one.  Note that if $y_{s+1} = 1$, we would have two different completions for $\{s+1,\ldots, t\}\cup J\setminus\{j_{s+1}\}$, which is a contradiction. Now, if there exists $\ell\in\{1,\ldots, s\}$ such that $y_\ell = 1$, we can consider, instead of $x^s$, the vector $\xi^s$ such that $\xi^s_\ell = 1$, $\xi^s_{s+2} = \cdots= \xi^s_t = 1$, and define $J$ based on $\xi^s$, and thus repeat the same argument as before. We thus conclude that there must exist $\zeta \in\{t+1,\ldots, n\}\setminus K^i$ such that $y_\zeta = 1$, and therefore $y_I = a$.

Finally, it is easy to see that $A$ cannot be a covering array of strength $t+1$. Indeed, suppose it is, and let $I = \{1,\ldots, t+1\}$. The existence of a configuration $x$ that has all its ones in $I$ and a vector $y$ that has $t$ ones in $I$ implies two different completions for the vector of ones in $\{j\in I: x_j = y_j = 1\}$, leading to a contradiction.
\end{proof}

\begin{theorem}\label{existenciamonotonas}
    Given a Steiner system $A$ with parameters $(n, t+1, t)$, where $2 \leq t < n/2$, there exists a monotone Boolean network $f$ such that $i(f) = t$ and $G(f) = K_n$, with fixed points that include $A$.
\end{theorem}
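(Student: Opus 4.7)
The plan is to define $f$ so that the defining monomial structure of each $f_i$ comes directly from the blocks of the Steiner system. Concretely, for each $i \in [n]$ I set
\begin{equation*}
    f_i(x) \;=\; \bigvee_{\substack{a \in A \\ a_i = 1}} \;\bigwedge_{\substack{j \in \mathrm{supp}(a) \\ j \neq i}} x_j,
\end{equation*}
where $\mathrm{supp}(a) = \{j : a_j = 1\}$. Each $f_i$ is a disjunction of conjunctions of positive literals, so $f$ is monotone, and $f_i$ manifestly does not depend on $x_i$, so $G(f)$ is loopless.

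To verify $G(f) = K_n$, fix $j \neq i$. Since $t \geq 2$, the Steiner property yields a block $a \in A$ with $\{i,j\} \subseteq \mathrm{supp}(a)$. Taking $x$ to be the characteristic vector of $\mathrm{supp}(a) \setminus \{i\}$, the term for $a$ in $f_i(x)$ evaluates to $1$; after flipping $x_j$ to $0$ the resulting input is the characteristic vector of the $(t-1)$-set $\mathrm{supp}(a) \setminus \{i,j\}$, and no $b \in A$ with $b_i = 1$ can have its associated $t$-set $\mathrm{supp}(b) \setminus \{i\}$ contained in this $(t-1)$-set, so $f_i$ does depend on $x_j$. For the inclusion $A \subseteq \FP(f)$, the case $i \in \mathrm{supp}(a)$ is immediate by using $a$ itself as witness. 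For $i \notin \mathrm{supp}(a)$, any witness $b \in A$ would satisfy $\mathrm{supp}(b) \setminus \{i\} \subseteq \mathrm{supp}(a)$, yielding a $t$-set contained in both $\mathrm{supp}(a)$ and $\mathrm{supp}(b)$, which contradicts Steiner uniqueness (here $a \neq b$ because $i \in \mathrm{supp}(b) \setminus \mathrm{supp}(a)$). Combined with the previous lemma, this gives $\FP(f) \in CA(n;t)$.

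The crux is showing $\FP(f) \notin CA(n;t+1)$, i.e.\ that $i(f) = t$. I would pick an arbitrary block $a_0 \in A$, set $I = \mathrm{supp}(a_0)$, choose any $i \in I$, and consider the configuration $\alpha \in \{0,1\}^{t+1}$ with $\alpha_i = 0$ and $\alpha_j = 1$ for $j \in I \setminus \{i\}$. If some $x \in \FP(f)$ satisfied $x_I = \alpha$, then the $a_0$-disjunct of $f_i(x)$ would read $\bigwedge_{j \in I \setminus \{i\}} x_j = 1$, forcing $f_i(x) = 1 \neq 0 = x_i$, a contradiction. The main obstacle here is that monotonicity typically generates many fixed points beyond $A$ (for instance $\Vec{0}$ and $\Vec{1}$ are always fixed points of any loopless monotone network with $G(f) = K_n$), and one might fear that these auxiliary fixed points secretly cover every $(t+1)$-configuration; the argument above sidesteps this by never analyzing $\FP(f)$ globally, and instead exploiting monotonicity through the single coordinate $f_i$.
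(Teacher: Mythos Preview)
Your construction and argument are correct and coincide with the paper's: the network $f_i(x)=\bigvee_{a\in A,\,a_i=1}\bigwedge_{j\in\mathrm{supp}(a)\setminus\{i\}}x_j$ is exactly the one used there, and the verifications that $A\subseteq\FP(f)$ and $G(f)=K_n$ are the same (yours is in fact slightly more careful on the dependence check). The only cosmetic difference is in establishing $i(f)\le t$: the paper observes that each $f_i$ has $IC(f_i)=t$ and invokes Theorem~\ref{teoremanecesario}, whereas you inline that argument directly by exhibiting the uncovered $(t{+}1)$-pattern $(\alpha_i=0,\ \alpha_{I\setminus\{i\}}=\vec 1)$ on $I=\mathrm{supp}(a_0)$.
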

\begin{proof}
    Let $A = \{y^1,\ldots, y^m\}$ be a $(n, t+1, t)$-Steiner system. By the previous lemma, we know that $A$ is a covering array of strength $t$. Now for every $i\in [n]$ we define the Boolean function
    \begin{align*} 
        f_i(x_1,\ldots, x_n) = \bigvee_{\{k\;:\,y^k_i = 1\}} \bigwedge_{\{j\neq i\,:\, y^k_j = 1\}} x_j.
    \end{align*}
Now we will prove that $A\cup \{\Vec{0}, \Vec{1}\} \subseteq \FP (f)$. Indeed, it is clear that $\Vec{0}$ and $ \Vec{1}$ are fixed points of $f$. Let $y^\ell\in A$, and let us prove that $f(y^\ell) = y^\ell$. Let $i\in[n]$, and suppose initially that $y^\ell_i = 0$. By contradiction, suppose $f_i(y^\ell) = 1$, and therefore there exists $k\in [m]$ where $y^k_i = 1$ and for every $j\neq i$ such that $y_j^k = 1$, we have that $y_j^\ell = 1$.  Notice that the above would imply that the index set $I = \{j\neq i\,:\, y_j^k = 1\}$, which has size $t$, has two different completions, one by $y^\ell$ and the other by $y^k$. This contradicts the uniqueness of the definition of Steiner systems. On the other hand, suppose now that $y_i^\ell = 1$. In this case, within the expression for $f_i(y^\ell)$, the following conjunction appears: $$ \bigwedge_{\{j\neq i\,:\, y_j^{\ell}=1\}} y_j^\ell$$
Therefore, $f_i(y^\ell) = 1$. 
This implies that for any $y^\ell$ in $A$, $f(y^\ell) = y^\ell $, which is equivalent to $A \subseteq \FP(f)$, and therefore $i(f) \geq t $. Moreover, by definition $IC(f_i) = t$ for every $i \in [n]$. Using the contrapositive of Theorem \ref{teoremanecesario}, we can conclude that $ i(f) < t+1 $, and thus $i(f) = t $.

Now we will prove that $G(f) = K_n$. To do this, we first notice that since $f_i$ can be written as a DNF formula without negated variables, $f_i$ is monotone and it depends on the variable $x_j$ if it appears in any clause. That is, $(j,i)$ is an arc in $G(f)$ if and only if there exists $y^k \in A$ such that $y_i^k = 1$ and $y_j^k = 1$, with $j \neq i$. Indeed, if $i \neq j \in [n]$, then we can consider any completion $T \subseteq [n]\setminus\{i,j\}$ with $|T| = t-2$. Then, by considering $T\cup\{i,j\}$, we have a subset of $t$ indices in $[n]$, and by definition, there exists a unique $y^k \in A\subseteq \FP(f)$ such that $y^k_i = y^k_j$ and $y_T = \Vec{1}$. Therefore, $(j,i) \in G(f)$, and as these are two arbitrary vertices, we conclude that $G(f) = K_n$.
\end{proof}

For example, the set $$A = \begin{matrix}
1101000\\0110100\\ 0011010\\ 0001101\\ 1000110\\ 0100011\\ 1010001\end{matrix}$$ is a Steiner system with parameters $(7,3,2)$. The previous construction gives us the 2-independent monotone network
\begin{align*}
    f_1(x) &= (x_2\land x_4)\lor (x_5\land x_6) \lor (x_3\land x_7)\\
    f_2(x) &=(x_1\land x_4)\lor (x_3\land x_5) \lor (x_6\land x_7)\\
    f_3(x) &=(x_2\land x_5)\lor (x_4\land x_6) \lor (x_1\land x_7)\\
    f_4(x) &=(x_1\land x_2)\lor (x_3\land x_6) \lor (x_5\land x_7)\\
    f_5(x) &=(x_2\land x_3)\lor (x_4\land x_7) \lor (x_1\land x_6)\\
    f_6(x) &=(x_3\land x_4)\lor (x_1\land x_5) \lor (x_2\land x_7)\\
    f_7(x) &=(x_4\land x_5)\lor (x_2\land x_6) \lor (x_1\land x_3)
\end{align*}
 %

Finally, we conclude this section by showing that monotone networks on $n$ variables cannot achieve independence number greater than $n/2$. We will state a more general proposition for regulatory networks.

\begin{proposition}
    Let $h:\{0,1\}^n\to\{0,1\}$ be an unate Boolean function. Define $\gamma^+ := \{i\in[n]\,:\, h \text{ is increasing on } i\}$ and $\gamma^- := \{i\in[n]\,:\, h \text{ is decreasing on } i\}$. Now we define a weight function, $\tilde w$, such that for every $x\in \{0,1\}^n$,
    \begin{align*}
        \tilde w(x) := |\{i\in \gamma^+\,:\, x_i = 1 \}| + |\{j\in \gamma^-\,:\, x_i = 0 \}|.
    \end{align*}
    Then, 
    \begin{align*}
        \max\left\{ \max_{\{x\,:\, h(x) = 1\}} (n-\tilde w(x)), \max_{\{y\,:\, h(y) = 0\}}\tilde w(y) \right\}\geq n/2.
    \end{align*}
\end{proposition}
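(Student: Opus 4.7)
The strategy is to reduce the claim for an arbitrary unate $h$ to the monotone case by flipping the coordinates on which $h$ is decreasing, and then establish the resulting inequality via a saturated-chain argument.

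First, I would define the involution $\phi:\{0,1\}^n\to\{0,1\}^n$ by
\begin{equation*}
\phi(y)_i=\begin{cases}y_i, & i\in\gamma^+,\\ 1-y_i, & i\in\gamma^-,\end{cases}
\end{equation*}
and set $\tilde h:=h\circ\phi$. Two observations drive the reduction: (i) $\tilde h$ is monotone nondecreasing, because a decreasing dependence of $h$ on some $i\in\gamma^-$ becomes an increasing dependence of $\tilde h$ on that same coordinate after the bit-flip; and (ii) for $x=\phi(y)$ one has $\tilde w(x)=w_H(y)$, since the $\gamma^+$-coordinates of $x$ equal to $1$ match those of $y$ equal to $1$, while the $\gamma^-$-coordinates of $x$ equal to $0$ correspond to the $\gamma^-$-coordinates of $y$ equal to $1$. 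Applying the bijection $x=\phi(y)$ then reformulates the proposition as
\begin{equation*}
\max\Bigl(\max_{\tilde h(y)=1}\bigl(n-w_H(y)\bigr),\ \max_{\tilde h(y)=0}w_H(y)\Bigr)\geq n/2
\end{equation*}
for the monotone function $\tilde h$.

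Second, I would prove this monotone reformulation by a chain argument. The constant cases are immediate: if $\tilde h\equiv 0$ then $y=\vec{1}$ gives $w_H(y)=n\geq n/2$, and if $\tilde h\equiv 1$ then $y=\vec{0}$ gives $n-w_H(y)=n\geq n/2$. Otherwise, take any saturated chain $\vec{0}=y^{(0)}<y^{(1)}<\cdots<y^{(n)}=\vec{1}$ with $w_H(y^{(i)})=i$; by monotonicity of $\tilde h$ along this chain there is a unique threshold index $j\in\{1,\ldots,n\}$ with $\tilde h(y^{(j-1)})=0$ and $\tilde h(y^{(j)})=1$. This yields
\begin{equation*}
\max_{\tilde h(y)=0}w_H(y)\geq j-1\qquad\text{and}\qquad \max_{\tilde h(y)=1}\bigl(n-w_H(y)\bigr)\geq n-j,
\end{equation*}
whose maximum is at least $\max(j-1,n-j)\geq \lceil (n-1)/2\rceil$. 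For $n$ even this matches $n/2$, and the majority function on odd $n$ shows the bound is essentially sharp at this scale.

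The only delicate point I anticipate is the verification of items (i) and (ii) above: both are bookkeeping, but one should be mindful of coordinates on which $h$ does not genuinely depend, since such indices may be placed in either $\gamma^+$ or $\gamma^-$ without altering any of the quantities involved. Once the monotone reduction is in place, the chain argument is a short pigeonhole on a single maximal chain and requires no further combinatorics.
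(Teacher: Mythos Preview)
Your proof is correct and takes a genuinely different route from the paper. The paper argues directly by contradiction with no reduction and no chain: assuming the overall maximum $\xi(h)<n/2$ is attained at some $y$ with $h(y)=0$ and $\tilde w(y)=\xi(h)$, it notes that every $x$ with $\tilde w(x)>\xi(h)$ must satisfy $h(x)=1$, picks any $z$ with $\tilde w(z)=\xi(h)+1$, and concludes that the first inner maximum is at least $n-\xi(h)-1$, which it asserts is $\geq n/2$. Your approach instead normalizes via the coordinate flip $\phi$ to a monotone $\tilde h$, identifies $\tilde w$ with ordinary Hamming weight, and then locates a threshold on a single saturated chain. This buys a more transparent argument: the identity $(j-1)+(n-j)=n-1$ makes the bound $\lceil (n-1)/2\rceil$ immediate, and you correctly flag that this matches $n/2$ only when $n$ is even, with odd-$n$ majority witnessing sharpness. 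The paper's direct argument avoids the reduction step but is less explicit about this parity issue (its final displayed inequality $n-\tilde w(z)\geq n/2$ tacitly uses integrality of $\xi(h)$, and for odd $n$ actually delivers only $(n-1)/2$, consistent with your observation). Your caveat about coordinates on which $h$ does not depend is apt: treating $\gamma^+,\gamma^-$ as a partition (placing any such index arbitrarily) is what makes the identification $\tilde w(\phi(y))=w_H(y)$ go through cleanly.
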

\begin{proof}
    Denote by $\xi(h)$ the maximum from the proposition above. Suppose by contradiction that $\xi(h)< n/2$ and assume that the maximum is attained in the second element. That is, there exists $y\in \{0,1\}^n$ such that $h(y) = 0$ and $\xi(h) = \tilde w(y)$. Then, for every $x\in\{0,1\}^n$ with $\tilde w(x)>\tilde w(y)$, we have $h(x) = 1$. In particular, there exists $z\in\{0,1\}^n$ such that $\tilde w(z) = \tilde w(y) + 1 = \xi(h) + 1$ and $h(z) = 1$. Therefore, 
    \begin{align*}
        \max_{\{x\,:\, h(x) = 1\}}(n-\tilde w(x)) \geq n-\tilde w(z)\geq n/2,
    \end{align*}
    which contradicts the assumption that $\xi(h)<n/2$. The proof in the case where the maximum is reached at the first element follows analogously.
\end{proof}
Before stating the following corollary, we will need to consider an alternative way of viewing $k$-set canalizing functions. To do so, recall that $\{0,1\}^n$ is the set of vertices of an $n$-cube $Q_n$, and that any Boolean function $h:\{0,1\}^n\to\{0,1\}$ can be understood as a coloring of the vertices of the $n$-cube with two colors (0 and 1). Now, fixing $k$ variables and considering all vectors that have these variables fixed translates into viewing a $(n-k)$-subcube of $Q_n$. Therefore, a function $h$ is $k$-set canalizing if and only if $Q_n$ has a monochromatic $Q_{n-k}$ according to the coloring given by $h$.
\begin{corollary}
    Let $h:\{0,1\}^n\to\{0,1\}$ be an unate function, then $IC(h)\leq n/2$.
\end{corollary}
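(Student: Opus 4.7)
The plan is to combine the preceding proposition with the fact that an unate function is monotone with respect to a natural ``flipped'' partial order on $\{0,1\}^n$. Define $x \preceq y$ if and only if $x_i \leq y_i$ for every $i \in \gamma^+$ and $x_i \geq y_i$ for every $i \in \gamma^-$. Unateness then says precisely that $h$ is monotone with respect to $\preceq$, since flipping any single coordinate in the $\preceq$-decreasing direction never increases $h$. Moreover, $\tilde w(y)$ counts exactly the coordinates at which $y$ is in its $\preceq$-maximal state (i.e.\ $i \in \gamma^+$ with $y_i = 1$, or $i \in \gamma^-$ with $y_i = 0$), and the down-set $\{z : z \preceq y\}$ is a coordinate subcube of dimension $\tilde w(y)$: the $n - \tilde w(y)$ coordinates at which $y$ is in its $\preceq$-minimal state are forced to equal $y$, while the remaining $\tilde w(y)$ coordinates are free.

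By the previous proposition, at least one of the following holds: (a) there exists $y$ with $h(y) = 0$ and $\tilde w(y) \geq n/2$, or (b) there exists $x$ with $h(x) = 1$ and $n - \tilde w(x) \geq n/2$. In case (a), the down-set $\{z : z \preceq y\}$ is a subcube of dimension $\tilde w(y) \geq n/2$ on which unateness forces $h \leq h(y) = 0$. Fixing the $n - \tilde w(y) \leq n/2$ coordinates outside this subcube to the corresponding values of $y$ therefore canalizes $h$ to $0$, yielding $IC(h) \leq n/2$. Case (b) is symmetric via the up-set $\{z : z \succeq x\}$, which is a subcube of dimension $n - \tilde w(x) \geq n/2$ on which $h \equiv 1$; canalization to $1$ is achieved by fixing the remaining $\tilde w(x) \leq n/2$ coordinates.

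The only real technical point is the bookkeeping around $\gamma^+$, $\gamma^-$, and the two possible states of each coordinate: one has to verify that $\preceq$ makes $h$ monotone, and that its up-sets and down-sets at a given point are honest coordinate subcubes whose dimensions are exactly $n - \tilde w$ and $\tilde w$ respectively. Once these identifications are made, the corollary follows immediately from the proposition, with no further computation needed.
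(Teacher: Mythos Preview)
Your proof is correct and follows the same overall strategy as the paper: invoke the preceding proposition to obtain a point $y$ with $h(y)=0$ and $\tilde w(y)\geq n/2$ (or the symmetric case), and then exhibit a monochromatic subcube through $y$ whose codimension is at most $n/2$.

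Where you differ is in the execution, and your version is in fact tidier. You introduce the order $\preceq$ with respect to which $h$ is genuinely monotone, and observe directly that the entire down-set $\{z:z\preceq y\}$ is a subcube of dimension $\tilde w(y)$ on which $h\equiv 0$; this yields $IC(h)\leq n-\tilde w(y)\leq n/2$ in one stroke. The paper instead works with the ordinary Hamming order inside the same subcube $S_y$, treats the cases $x<y$ and $x>y$ separately, declares the incomparable points undetermined, and therefore retreats to a subcube of one smaller dimension, obtaining only $IC(h)\leq n-\xi(h)+1$. Your use of $\preceq$ makes it clear that \emph{every} $x\in S_y$ satisfies $h(x)=0$, so no dimension needs to be sacrificed. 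One small caveat worth spelling out if you write this up formally: if $h$ does not depend on some coordinate $i$ then $i\in\gamma^+\cap\gamma^-$, and your $\preceq$ would force $z_i=y_i$ there; simply declare such coordinates free (or choose $\gamma^+,\gamma^-$ to partition $[n]$), which is harmless since $h$ ignores them.
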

\begin{proof}
     Consider $\gamma^+$ and $\gamma^-$ defined in the same manner than the previous proposition. Suppose first that $\xi(h)$ is attained in the second maximum and $y\in\{0,1\}^n$ satisfies $\tilde w(y ) = \xi(h)$. Denote $\gamma^+_1(x) =\{i\in \gamma^+\,:\, x_i = 1\}$ and $\gamma^+_0(x) =\{i\in \gamma^+\,:\, x_i = 0\}$ (and analogously $\gamma^-_0(x), \gamma^-_1(x)$) for $x\in\{0,1\}^n$ and consider 
     \begin{align*}
         S_y = \{x\in\{0,1\}^n\,:\,\gamma^+_0(x) = \gamma^+_0(y)\, \land\, \gamma_1^-(x) = \gamma_1^-(y)\}.
     \end{align*}
     Recall that $\tilde w(y) = \gamma_1^+(y)  + \gamma_0^-(y)$. Note that $S_y$ is a set of vectors in $\{0,1\}^n$ that originates from fixing $\gamma_0^+(y) + \gamma_1^-(y) = n -\tilde w(y)$ variables, and therefore, it is a $\tilde \xi(h)$-subcube of $Q_n$. Now observe that we are fixing all increasing variables that are zero and all decreasing variables that are one in $y$. Consider $x\in S_y$ with $x < y$; given that the free decreasing variables of $y$ are zero, it necessarily follows that $x_{\gamma^+}< y_{\gamma^+}$, and therefore $h(x) = 0$. On the other hand, now consider $x\in S_y$ such that $x > y$. Since the increasing variables not fixed in $y$ are all ones, it necessarily follows that $x_{\gamma^-}> y_{\gamma^-}$, and therefore $h(x) = 0$. For every $x\in S_y$ such that $w_H(x) = w_H(y)$ we are not able to determine if $h(x) = 0$ or not. However, we can consider \begin{align*}
         S_{<y} = \{x\in S_y\,:\, x< y\}\quad \text{or}\quad  S_{>y} = \{x\in S_y\,:\, x> y\},
     \end{align*}
    where both sets contain a zero monochromatic $Q_{\xi(h)-1}$, which implies $$IC(h)\leq n-\xi(h)+1\leq n/2.$$
    Finally, if $\xi(h)$ is attained in the first element, a similar argument can be developed by considering $y\in\{0,1\}^n$ such that $n-\tilde w(y) = \xi(h)$, $h(y) = 1$ and
    $$S_y = \{x\in\{0,1\}^n\,:\, \gamma_1^+(x) = \gamma_1^+(y)\,\land\, \gamma_0^-(x) = \gamma_0^-(y)\}.$$
\end{proof}

\begin{corollary}
    There is no $k$-independent monotone Boolean network with $k>n/2$.
\end{corollary}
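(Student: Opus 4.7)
The plan is to derive the corollary as a direct consequence of the two preceding results: Theorem \ref{teoremanecesario}, which lower bounds $IC(f_i)$ by the independence number whenever $G(f)$ has no loops, and the previous corollary, which upper bounds $IC$ by $n/2$ for any unate function.

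Arguing by contradiction, suppose $f = (f_1, \ldots, f_n)$ is a monotone Boolean network with $i(f) = k > n/2$. Since monotone Boolean functions form a sub-class of the unate functions (they are unate with $\gamma^- = \emptyset$), each local activation function $f_i$ is a unate function of $n$ variables. Applying the preceding corollary coordinate by coordinate yields $IC(f_i) \leq n/2$ for every $i \in [n]$. Assuming that $G(f)$ is loopless, Theorem \ref{teoremanecesario} forces $IC(f_i) \geq i(f) = k$ for every $i$. Chaining the two bounds gives $k \leq n/2$, contradicting $k > n/2$, and hence no such $f$ can exist.

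The main obstacle is the loopless hypothesis built into Theorem \ref{teoremanecesario}, since a monotone network is a priori allowed to have self-loops. To address a loop at an index $i$, I would decompose $f_i$ into its two slices $g_i^\varepsilon(x_{-i}) := f_i(x \mid x_i = \varepsilon)$ for $\varepsilon \in \{0,1\}$; both are monotone on $n-1$ variables and hence satisfy $IC(g_i^\varepsilon) \leq (n-1)/2 < k$. Using the $k$-independence of $f$ to produce, for any canalizing set of $g_i^\varepsilon$ of size less than $k$, fixed points of $f$ with arbitrary value at $i$ and prescribed values on that set, one can mimic the proof of Theorem \ref{teoremanecesario} to show that any such canalizing input of $g_i^0$ must canalize to $0$ and of $g_i^1$ to $1$. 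Combining these directional constraints with the pointwise inequality $g_i^0 \leq g_i^1$ and the abundance of fixed points then reproduces the contradiction. Carrying out this slice reduction cleanly is the step I expect to be the most delicate.
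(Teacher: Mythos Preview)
Your core argument---combining Theorem~\ref{teoremanecesario} with the preceding corollary bounding $IC$ of unate functions by $n/2$---is exactly the paper's proof, and under the loopless assumption it is correct.

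You are also right to flag the loop hypothesis: Theorem~\ref{teoremanecesario} requires $G(f)$ to be loopless, while the corollary as stated does not assume this. The paper's own proof simply invokes Theorem~\ref{teoremanecesario} without addressing the point, so the gap you spotted is genuine and is glossed over in the original.

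However, your proposed slice reduction to cover the loop case cannot be completed, because the corollary is actually \emph{false} without the loopless hypothesis. The identity network $f(x)=x$ is monotone, has $\FP(f)=\{0,1\}^n$, and hence $i(f)=n>n/2$. Concretely, for $f_i(x)=x_i$ one gets $g_i^0\equiv 0$ and $g_i^1\equiv 1$; these satisfy all the directional canalizing constraints you derive (every small canalizing input of $g_i^0$ does canalize to $0$, and likewise $g_i^1$ to $1$) as well as $g_i^0\le g_i^1$, so no contradiction is available from that data. The correct repair is to add the loopless hypothesis to the statement (as the paper effectively does implicitly), not to attempt to prove it with loops allowed.
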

\begin{proof}
     Suppose there exists a $k$-independent monotone Boolean network $f:\{0,1\}^n\to\{0,1\}^n$ with $k> n/2$. Theorem \ref{teoremanecesario}, for every $i\in [n]$, $IC(f_i)\geq k >n/2$. Since $f_i$ is monotone, it is unate, and by the previous result, $IC(f_i)\leq n/2$.
\end{proof}

\section{Concluding remarks and open problems}
We introduced the concept of $k$-independent Boolean networks and addressed fundamental questions about their existence, in the general case, through Theorem \ref{completegraph}, and in the monotone case, through Theorem \ref{existenciamonotonas}, for specific values of $n$ and $k$ determined by the existence of Steiner systems with those parameters. Furthermore, we derived necessary conditions in terms of the interaction graph to represent a $k$-independent network, as detailed in Theorem \ref{teoremanecesario} and its respective corollaries. On the other hand, we also presented constructions that demonstrate the existence of networks with fixed $i(f)$ and disconnected interaction graph, as shown in Remark \ref{prop:increasevertex}; with connected interaction digraph, as detailed in Proposition \ref{lema:pegarcliques}; with strongly connected graph, as presented in Proposition \ref{fuerteconexo1} and Proposition \ref{windmillsxor}. Additionally, we explored constructions showing how the parameters $m$, $n$, and $k$ vary for networks $f$ in $n$ variables, with $i(f) = k$ and $m$ fixed points, as described in Proposition \ref{prop: maximumnumbervectors}.

Furthermore, there is a wide range of open questions, such as the general existence of monotone Boolean networks in $n$ variables with $1\leq k <n$. Similarly to what was discussed in Section \ref{construcciones}, constructions are also needed to vary the parameters of monotone networks. Likewise, characterizations of networks with $i(f) = k$ in terms of structural properties of the interaction graph, for a specific family of networks, remain to be discovered. We believe it would be interesting to adapt and utilize results from coding theory to advance in this direction. Similarly, we believe it could be interesting to explore Boolean networks whose sets of fixed points exhibit other combinatorial structures, such as Orthogonal arrays ~\cite{liu1995survey}, Covering arrays avoiding Forbidden Edges ~\cite{danziger2009covering}, Covering arrays on graphs ~\cite{meagher2005covering}, or more generally, to investigate how parameters studied in set-systems (e.g., ~\cite{lovasz1968chromatic}) translate to the set of fixed points and understand their implications in terms of the interaction graph.



\begin{thebibliography}{99}
{}
\bibitem{akutsu1999identification}
Tatsuya Akutsu, Satoru Miyano, and Satoru Kuhara. “Identification of
genetic networks from a small number of gene expression patterns under the
Boolean network model”. \emph{Biocomputing’99}. World Scientific, 1999,
17–28.
{}
\bibitem{aracena2008maximum}
Julio Aracena. “Maximum number of fixed points in regulatory Boolean
networks”. \emph{Bulletin of mathematical biology} 70.5 (2008), 1398–1409.
{}
\bibitem{aracena2017number}
Julio Aracena, Adrien Richard, and Lilian Salinas. “Number of fixed points
and disjoint cycles in monotone Boolean networks”. \emph{SIAM journal on
Discrete mathematics} 31.3 (2017), 1702–1725.
{}
\bibitem{barber2020minimalist}
Ben Barber et al. “Minimalist designs”. \emph{Random Structures \&
Algorithms} 57.1 (2020), 47–63.
{}
\bibitem{danziger2009covering}
Peter Danziger et al. “Covering arrays avoiding forbidden edges”.
\emph{Theoretical Computer Science} 410.52 (2009), 5403–5414.
{}
\bibitem{doyen1980updated}
Jean Doyen and Alexander Rosa. “An updated bibliography and survey of
Steiner systems”. \emph{Annals of Discrete Mathematics}. Vol. 7. Elsevier,
1980, 317–349.
{}
\bibitem{gadouleau2015fixed}
Maximilien Gadouleau, Adrien Richard, and Søren Riis. “Fixed points of
Boolean networks, guessing graphs, and coding theory”. \emph{SIAM Journal
on Discrete Mathematics} 29.4 (2015), 2312–2335.
{}
\bibitem{glock2016existence}
Stefan Glock et al. “The existence of designs via iterative absorption”.
\emph{arXiv preprint arXiv:1611.06827} (2016), 3120–3122.
{}
\bibitem{green2007emergence}
David G Green, Tania G Leishman, and Suzanne Sadedin. “The emergence
of social consensus in Boolean networks”. \emph{IEEE Symposium on
Artificial Life}. IEEE. 2007, 402–408.
{}
\bibitem{helikar2011boolean}
Tomas Helikar et al. “Boolean modeling of biochemical networks”.
\emph{The Open Bioinformatics Journal} 5.1 (2011).
{}
\bibitem{kadelka2023collectively}
Claus Kadelka, Benjamin Keilty, and Reinhard Laubenbacher. “Collectively
canalizing Boolean functions”. \emph{Advances in Applied Mathematics} 145
(2023), 102475.
{}
\bibitem{katona1973two}
Gyula Katona. “Two applications (for search theory and truth functions) of
Spencer type theorems”. \emph{Periodica Mathematica Hungarica} 3 (1973),
19–26.
{}
\bibitem{kauffman1969metabolic}
Stuart A Kauffman. “Metabolic stability and epigenesis in randomly
constructed genetic nets”. \emph{Journal of theoretical biology} 22.3 (1969),
437–467.
{}
\bibitem{kauffman1993origins}
Stuart A Kauffman. “The origins of order: Self-organization and selection in
evolution”. Oxford University Press, USA, 1993.
{}
\bibitem{keevash2014existence}
Peter Keevash. “The existence of designs”. \emph{arXiv preprint
arXiv:1401.3665} (2014).
{}
\bibitem{kleitman1973families}
Daniel J Kleitman and Joel Spencer. “Families of k-independent sets”.
\emph{Discrete mathematics} 6.3 (1973), 255–262.
{}
\bibitem{KRUPA2002}
Boris Krupa. “On the number of experiments required to find the causal
structure of complex systems”. \emph{Journal of Theoretical Biology} 219.2
(2002), 257–267.
{}
\bibitem{LJKRLYKRFM2011}
Jim Lawrence et al. “A survey of binary covering arrays”. \emph{the
electronic journal of combinatorics} (2011), P84–P84.
{}
\bibitem{liu1995survey}
Zhangwen Liu and Fujii Yoshio. “A survey of orthogonal arrays of strength
two”. \emph{Acta Mathematicae Applicatae Sinica} 11 (1995), 308–317.
{}
\bibitem{lovasz1968chromatic}
László Lovász. “On chromatic number of finite set-systems”. \emph{Acta
Mathematica Academiae Scientiarum Hungarica} 19 (1968), 59–67.
{}
\bibitem{meagher2005covering}
Karen Meagher and Brett Stevens. “Covering arrays on graphs”.
\emph{Journal of Combinatorial Theory, Series B} 95.1 (2005), 134–151.
{}
\bibitem{Osorio}
I. Osorio. “{VC-dimension in Boolean networks}”. MA thesis.
Departamento de Ingeniería Informática. Universidad de Concepción, 2023.
{}
\bibitem{veliz2012computation}
Alan Veliz-Cuba and Reinhard Laubenbacher. “On the computation of
fixed points in Boolean networks”. \emph{Journal of Applied Mathematics
and Computing} 39.1 (2012), 145–153.
\end{thebibliography}



\end{document}